\documentclass[12pt,a4paper,reqno]{amsart}

\usepackage[utf8]{inputenc}
\usepackage[T1]{fontenc}
\usepackage{lmodern}
\usepackage[english]{babel}

\usepackage{amsmath,amssymb,amsthm,mathtools}
\usepackage[margin=1in, footskip=30pt]{geometry}
\usepackage{longtable,booktabs}

\makeatletter
\renewcommand{\section}{\@startsection{section}{1}%
  {0pt}{-3.5ex plus -1ex minus -.2ex}{2.3ex plus .2ex}%
  {\normalfont\large\bfseries}}
\renewcommand{\subsection}{\@startsection{subsection}{2}%
  {0pt}{-3.25ex plus -1ex minus -.2ex}{1.5ex plus .2ex}%
  {\normalfont\normalsize\bfseries}}
\renewcommand{\@secnumfont}{\bfseries}
\makeatother

\makeatletter
\def\@settitle{}
\def\@setauthors{}
\def\@setdate{}
\def\ps@firstpage{\ps@plain}%
\def\@setcopyright{}%
\renewcommand{\copyrightinfo}[2]{}%
\AtBeginDocument{\def\@setcopyright{}\def\@serieslogo{}}%

\makeatother

\usepackage[hidelinks,
  pdftitle={A torsion-intersection proof of perfect-cuboid nonexistence on 1072 explicit master-tuple fibers},
  pdfauthor={René Peschmann},
  pdfsubject={Number Theory, Algebraic Geometry},
  pdfkeywords={Perfect cuboid, Euler brick, Chabauty-Coleman, hyperelliptic curve, rational points},
  pdflang={en},
]{hyperref}
\usepackage{microtype}
\usepackage{enumitem}
\usepackage{tikz-cd}

\makeatletter
\def\thm@space@setup{%
  \thm@preskip=10pt plus 3pt minus 2pt
  \thm@postskip=10pt plus 3pt minus 2pt
}
\makeatother
\theoremstyle{plain}
\newtheorem{theorem}{Theorem}[section]

\newtheorem{lemma}[theorem]{Lemma}
\newtheorem{corollary}[theorem]{Corollary}

\theoremstyle{definition}

\newtheorem{example}[theorem]{Example}

\theoremstyle{remark}
\newtheorem{remark}[theorem]{Remark}
\newtheorem{conjecture}[theorem]{Conjecture}

\newcommand{\Z}{\mathbb{Z}}
\newcommand{\Q}{\mathbb{Q}}

\newcommand{\N}{\mathbb{N}}
\newcommand{\F}{\mathbb{F}}

\newcommand{\HC}{H_{m,n}}

\newcommand{\Jac}{\operatorname{Jac}}
\newcommand{\rk}{\operatorname{rk}}
\newcommand{\tors}{\mathrm{tors}}
\DeclareMathOperator{\disc}{disc}
\DeclareMathOperator{\Res}{Res}

\DeclareMathOperator{\Sel}{Sel}

\title[Torsion-intersection nonexistence on 1072 fibers]{A torsion-intersection proof of perfect-cuboid nonexistence on 1072 explicit master-tuple fibers}

\author{Ren\'e Peschmann}

\date{\small \today}

\begin{document}

\maketitle

\begin{center}
  {\large\bfseries A torsion-intersection proof of perfect-cuboid\\nonexistence on 1072 explicit master-tuple fibers}
  \par\vskip 12pt
  {Ren\'e Peschmann}
  \par\vskip 4pt
  {\small \today}
\end{center}
\vskip 16pt

\begin{abstract}
Building on the genus-3 reduction $C_A : w^2 = \lambda^8 + A \lambda^4 + 1$ established in \cite{peschmann-paper1}, we give an \emph{unconditional} proof of the perfect-cuboid conjecture (``Conjecture~B'') on $1{,}072$ explicit master-tuple fibers, excluding all rational $(a,b)$-specialisations on each such fiber. Our three main contributions are: (i) a structural classification theorem showing that every primitive Euler-brick arises from the standard $(a,b,m,n)$-parametrization up to scaling; (ii) a torsion-intersection argument applied to the elliptic quotients $E_A'$ and $E_A''$ of \cite[\S 4.1]{peschmann-paper1}: whenever the rank-zero hypothesis and the appropriate torsion condition hold for one of them, $|H_{m,n}(\Q)| = 8$ is forced, with the eight points all corresponding to degenerate bricks; (iii) two complementary techniques to verify the rank-zero hypothesis algorithmically — PARI's \texttt{ellrank} (2-descent), and where this is ambiguous, Sage's exact rational evaluation of $L(E,1)/\Omega_E$ via modular symbols, which combined with the modularity theorem~\cite{breuil-conrad-diamond-taylor}, Kolyvagin's theorem~\cite{kolyvagin}, and Edixhoven's bound on the Manin constant for semistable curves~\cite{edixhoven-manin} yields an unconditional rank-zero certificate — together with an explicit lift count refining the naive torsion-intersection bound when the torsion is larger than the leading case. We exhibit $1{,}072$ such fibers with $\max(m, n) \le 100$ on which Conjecture~B is thereby established unconditionally.
\end{abstract}

\section{Introduction}
\label{sec:intro}

\subsection{The perfect-cuboid problem}

An \emph{Euler-brick} is a rectangular parallelepiped with integer edges $X, Y, Z \in \N$ and integer face diagonals
$$
\sqrt{X^2 + Y^2}, \quad \sqrt{X^2 + Z^2}, \quad \sqrt{Y^2 + Z^2} \in \N.
$$
A \emph{perfect cuboid} (or \emph{perfect Euler-brick}) is an Euler-brick with additionally rational space diagonal $\sqrt{X^2 + Y^2 + Z^2} \in \N$.

\begin{conjecture}[Cuboid Conjecture; ``Conjecture B'']
\label{conj:B}
No perfect cuboid exists in $\N^3$.
\end{conjecture}

The conjecture has been open since at least the work of Saunderson (1740) and remains a long-standing problem in elementary number theory. Computational searches have ruled out perfect cuboids with the smallest edge below $5 \cdot 10^{11}$ and the odd edge below $2.5 \cdot 10^{13}$ (\cite{rathbun}, \cite{matson}).

\subsection{Master tuples}
\label{ssec:master-tuples}

Following the standard parametrization, we associate to a 4-tuple $(a, b, m, n) \in \N^4$ with $\gcd(a,b) = \gcd(m,n) = 1$, $a > b$, $m > n$, and $a-b, m-n$ odd, the quantities
\begin{align*}
U_1 &= a^2 - b^2, & V_1 &= 2ab, & W_1 &= a^2 + b^2, \\
U_2 &= m^2 - n^2, & V_2 &= 2mn, & W_2 &= m^2 + n^2.
\end{align*}
The associated Euler-brick (when defined) has edges
$$
X = U_1 U_2, \quad Y = V_1 U_2, \quad Z = U_1 V_2.
$$
We call $(a, b, m, n)$ a \emph{master tuple} if the quantity
$$
M := (V_1 U_2)^2 + (U_1 V_2)^2
$$
is a perfect square, equivalently $Y^2 + Z^2$ is a square. The remaining face-diagonal squares $X^2 + Y^2 = (W_1 U_2)^2$ and $X^2 + Z^2 = (U_1 W_2)^2$ are squares automatically.

We further introduce the polynomial
\begin{equation}\label{eq:f1-def}
f_1 := (W_1 U_2)^2 + (U_1 V_2)^2 \;\in\; \Z[a, b, m, n],
\end{equation}
which encodes the perfect-cuboid condition: writing $g := \gcd(U_1, U_2)$, the
edges of the (primitive) brick associated with the master tuple are
$X = U_1 U_2/g$, $Y = V_1 U_2/g$, $Z = U_1 V_2/g$, so a direct computation
gives
$$
g^2 \cdot (X^2 + Y^2 + Z^2) \;=\; (U_1 U_2)^2 + (V_1 U_2)^2 + (U_1 V_2)^2
\;=\; W_1^2 U_2^2 + U_1^2 V_2^2 \;=\; f_1.
$$
Hence $X^2 + Y^2 + Z^2$ is a rational square iff $f_1$ is a perfect square in
$\Z$. Throughout, $f_1$ denotes the polynomial of \eqref{eq:f1-def}; the
scaled value attached to a specific brick is $f_1/g^2$.

\subsection{Main results}

\begin{theorem}[Reduction to master tuples; Theorem~\ref{thm:reduction}]
Let $(X, Y, Z)$ be a primitive Euler-brick, where $X$ is the unique odd edge (parity considerations force exactly one odd edge among $\{X, Y, Z\}$). Then there is a unique master tuple $(a, b, m, n) \in \N^4$ with $a > b$, $m > n$, $\gcd(a,b) = \gcd(m,n) = 1$, and $a-b, m-n$ odd, such that
$$
(X, Y, Z) = (U_1 U_2 / g, \; V_1 U_2 / g, \; U_1 V_2 / g),
\quad \text{where } g = \gcd(U_1, U_2).
$$
The master tuple depends on the choice of which face-diagonal partner of $X$ is labelled $Y$ versus $Z$; swapping $(a,b) \leftrightarrow (m,n)$ corresponds to swapping $Y \leftrightarrow Z$.
\end{theorem}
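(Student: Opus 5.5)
The plan is to reduce to the classical parametrization of primitive Pythagorean triples, applied separately to the two face triangles containing the odd edge $X$, and then reconcile the two scalings.

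First I would settle parity. If two edges were odd, their squares would sum to $2 \pmod 4$, which is not a square. If all three were even, the brick would not be primitive. Hence exactly one edge, $X$, is odd, and $Y, Z$ are even.

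Next, put $d := \gcd(X,Y)$ and $e := \gcd(X,Z)$; both are odd because $X$ is odd. Then $(X/d, Y/d, \sqrt{X^2+Y^2}/d)$ is a primitive Pythagorean triple with odd leg $X/d$. By the classical theorem there are unique coprime $a>b$ with $a-b$ odd such that $X/d = a^2-b^2 = U_1$ and $Y/d = 2ab = V_1$. Applying the same theorem to $(X/e, Z/e)$ gives unique $(m,n)$ with $X/e = U_2$ and $Z/e = V_2$. So $X = dU_1 = eU_2$, $Y = dV_1$ and $Z = eV_2$.

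The key step is reconciling the scalings $d$ and $e$. From $dU_1 = eU_2$ and $g = \gcd(U_1,U_2)$ we get $d\,(U_1/g) = e\,(U_2/g)$ with $\gcd(U_1/g, U_2/g) = 1$. Therefore $d = t\,U_2/g$ and $e = t\,U_1/g$ for some positive integer $t$. Substituting gives
$$X = t\,U_1U_2/g,\quad Y = t\,V_1U_2/g,\quad Z = t\,U_1V_2/g.$$
Thus $t$ divides all three edges, and primitivity forces $t=1$, which is the claimed form.

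It remains to check that $(a,b,m,n)$ really is a master tuple. We have $M = (V_1U_2)^2 + (U_1V_2)^2 = g^2(Y^2+Z^2)$, and $Y^2+Z^2$ is a square because the brick is Euler, so $M$ is a perfect square.

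For uniqueness, suppose another master tuple $(a',b',m',n')$ represents the same brick with $Y$ as the partner of $a,b$. Then $X/Y = U_1'/V_1'$, and $U_1'/V_1'$ is in lowest terms: the legs of a primitive triple are coprime because $\gcd(a',b')=1$ and $a'-b'$ is odd. Hence $(U_1',V_1') = (X/d, Y/d)$, and uniqueness in the Pythagorean parametrization gives $(a',b') = (a,b)$. The same argument gives $(m',n') = (m,n)$. Exchanging the roles of $Y$ and $Z$ visibly swaps $(a,b)$ and $(m,n)$.

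The main obstacle is the gcd bookkeeping in the reconciliation step: showing that the a priori unrelated scalings $d$ and $e$ are exactly $U_2/g$ and $U_1/g$. The coprimality argument above, combined with primitivity, is how I would handle it.
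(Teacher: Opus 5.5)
Your proposal is correct and takes essentially the same route as the paper. Both apply Euclid's parametrization to the two primitive face triangles through the odd edge $X$, with scalings $d=\gcd(X,Y)$ and $e=\gcd(X,Z)$, reconcile $dU_1=eU_2$ using primitivity, and check $M=g^2(Y^2+Z^2)$. Your $t$-argument is the paper's ``$\gcd(d,e)=1$ plus an elementary argument'' written out explicitly. Your uniqueness step, where any representing tuple must satisfy $(U_1',V_1')=(X/d,Y/d)$ because $U_1'/V_1'=X/Y$ is in lowest terms, fills in a detail the paper leaves implicit.
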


\begin{theorem}[Main result; Theorem~\ref{thm:hauptsatz}]
Let $(m, n) \in \N^2$ be coprime with $m - n$ odd, and let $\HC$ denote the $(m, n)$-fiber form of the genus-3 curve $C_A$ from \cite[\S 3]{peschmann-paper1} (an explicit substitution; see \S\ref{ssec:explicit-form}). Suppose that, for one of the elliptic quotients $E_q \in \{E_A', E_A''\} = \{E_{uV}, E_3\}$, the following holds:
\begin{enumerate}[label=(\roman*),leftmargin=2em]
  \item $\rk(E_q(\Q)) = 0$ (rigorously, e.g.\ via PARI's \texttt{ellrank} or, when the latter is ambiguous, via the modularity theorem combined with Kolyvagin's theorem applied to a vanishing analytic-rank upper bound);
  \item the explicit lift count of rational $H_{m,n}$-preimages over $E_q(\Q)_\tors$ equals exactly $8$ (the eight degenerate trivial points already exhibited on $\HC$).
\end{enumerate}
Then $|\HC(\Q)| = 8$ and all rational points are degenerate. In particular, no perfect cuboid exists on the $(m, n)$-fiber.
\end{theorem}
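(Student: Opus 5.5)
The argument is a rank-zero Chabauty-free reduction through the quotient map. Let $\pi_q\colon \HC \to E_q$ be the nonconstant morphism defined over $\Q$ that realises $E_q$ as an elliptic quotient of $C_A$; it is inherited from \cite[\S 4.1]{peschmann-paper1} after the explicit $(m,n)$-substitution of \S\ref{ssec:explicit-form}. Since $\pi_q$ is defined over $\Q$, it sends $\HC(\Q)$ into $E_q(\Q)$. Hypothesis (i) together with Mordell--Weil gives $E_q(\Q)=E_q(\Q)_\tors$, which is finite and computable: Nagell--Lutz, or reduction modulo several good primes, pins it down exactly. Hence
\[
\HC(\Q)\subseteq \bigcup_{T\in E_q(\Q)_\tors}\pi_q^{-1}(T)(\Q).
\]
This reduces the problem to a finite computation.

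Next I would make the fibres explicit. For each torsion point $T$, the fibre $\pi_q^{-1}(T)$ is a zero-dimensional scheme of degree $\deg\pi_q$, presumably $2$ for a quotient by an involution, together with points over $\infty$ handled separately. Write $\pi_q$ in coordinates $(\lambda,w)\mapsto (u(\lambda),v(\lambda,w))$, for instance $u=\lambda^2+\lambda^{-2}$ or $u=\lambda^4$ type expressions coming from the automorphisms $\lambda\mapsto -\lambda,\ 1/\lambda,\ i\lambda$ of $w^2=\lambda^8+A\lambda^4+1$. Then $\pi_q^{-1}(T)$ is cut out by polynomial equations in $\lambda$ with coefficients in $\Q(m,n)$ specialised.

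The rational points in each fibre are found by factoring these polynomials over $\Q$ and keeping the rational roots $\lambda$ for which $w$ is rational. Points at infinity and the ramification points of $\pi_q$, which map to $2$-torsion, must be treated separately so that none is missed. Summing over $T$ gives the explicit lift count. Hypothesis (ii) asserts that this count is exactly $8$.

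Finally I would match these eight points with the eight degenerate points already exhibited on $\HC$. These are the points $(\lambda,w)=(0,\pm1)$, $(\pm1,\cdot)$-type points and the two points at infinity, subject to the actual coordinates of \S\ref{ssec:explicit-form}. They are rational and therefore lie among the lifts, so equality of counts forces $\HC(\Q)$ to be exactly this set, giving $|\HC(\Q)|=8$.

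It then remains to translate back. Each degenerate point corresponds under the parametrisation of \cite[\S 3]{peschmann-paper1} to $ab=0$, $a=b$, or a vanishing edge. A perfect cuboid on the $(m,n)$-fibre would, by Theorem~\ref{thm:reduction} and the identity $g^2(X^2+Y^2+Z^2)=f_1$, yield a rational $(a,b)$ with $f_1$ a square and $M$ a square, hence a nondegenerate point of $\HC(\Q)$. No such point exists.

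The main obstacle is making the lift count genuinely exhaustive and uniform. One has to prove that the map from $(a,b)$-data to $\HC(\Q)$ is well defined and injective modulo the known symmetries, including at the points where the substitution degenerates, which are the points over $\infty$ and the branch points. I would handle these by working on the smooth projective model: first verify that the eight trivial points are pairwise distinct and map to torsion, then check that the fibre polynomial over each $T$ has no further rational roots. This is exactly the finite computation that hypothesis (ii) certifies.
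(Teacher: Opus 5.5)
Your proposal is correct and follows the paper's argument. Rank zero forces $\pi_q(\HC(\Q))\subseteq E_q(\Q)_{\tors}$, and hypothesis (ii) caps the rational preimages at $8$. The eight pairwise distinct trivial points of Lemma~\ref{lem:trivial} then exhaust $\HC(\Q)$, while a genuine cuboid would give a rational point with $t=a/b>1$ outside this set.

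The only difference is that the paper's proof verifies (ii) case by case, working directly in the coordinates $u=t+1/t$, $w=t-1/t$ on $\HC$. Over each rational branch point $(\pm 2,\pm 4U_2^2)$ of $\pi_{uV}$ there is exactly one preimage, $\pi_3$ has no rational branch points, and every other torsion point has at most two preimages. Your speculative $(\lambda,w)$-formulas and the unused remark that branch points map to $2$-torsion are therefore unnecessary and can simply be dropped.
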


When the torsion of $E_q$ matches the minimal target ($|tors| = 4$ for $E_3$, since $E_3$ has no rational ramification points; $|tors| = 6$ for $E_{uV}$, with four rational ramification points contributing one preimage each), condition (ii) is automatic and the bound reduces to the naive torsion-intersection inequality of Lemma~\ref{lem:tors-intersect}. When $|tors_{uV}| = 8$, condition (ii) is verified by an explicit enumeration of the eight rational torsion points and a check of which of them lift rationally to $\HC$.

We exhibit $1{,}072$ such fibers with $\max(m, n) \le 100$ (Appendix~\ref{app:fibers}); on each of them Conjecture~\ref{conj:B} is established by our argument. The companion paper \cite[\S 4.3 and \S 8]{peschmann-paper1} reports $42$ rank-zero specialisations of $E_A$ and $54$ of $E_A'$ via different (Silverman-style) methods; our fibers are obtained via the quotients $E_{uV}$ and $E_3$ and a different rigorous-rank-determination pipeline.

\subsection{Relation to existing literature}

The perfect-cuboid problem has been the subject of several complementary research strands.

\emph{Companion paper.} The most direct precursor is the author's own \cite{peschmann-paper1}, which establishes the genus-$3$ reduction $C_A: w^2 = \lambda^8 + A\lambda^4 + 1$ via the conic-quartic framework, identifies the Klein-four involution group $\langle \iota_1, \iota_2 \rangle$ with three elliptic quotients $E_A, E_A', E_A''$, and proves $\Jac(C_A) \sim E_A \times E_A' \times E_A''$ via Kani--Rosen \cite{kani-rosen}. The same paper develops obstructions on $E_A$ via a Kummer character $\chi_f$ and 2-descent, ruling out specific descent classes and verifying computationally that no perfect cuboid arises from parameters up to $10^3$. The present paper builds on this foundation.

\emph{Computational searches.} Beyond the parameter range of \cite{peschmann-paper1}, extensive enumeration up to a smallest-edge bound of $5 \cdot 10^{11}$ and an odd-edge bound of $2.5 \cdot 10^{13}$ has found no example \cite{rathbun, matson}. These searches provide numerical evidence but no proof.

\emph{Algebraic-geometric attacks on sub-varieties.} Bremner and MacLeod \cite{bremner-macleod} studied perfect cuboids constrained to specific sub-families. Stoll \cite{stoll-twists} developed refined Chabauty--Coleman bounds for genus-$2$ slices.

\emph{Modern Chabauty--Coleman.} For genus-$3$ hyperelliptic curves with rank-$1$ Jacobian, the methods of Balakrishnan, Bianchi, Cantoral-Farfan, Ciperiani, and Etropolski \cite{balakrishnan-win4} apply, building on \cite{coleman1985} and the quadratic Chabauty extension \cite{balakrishnan-dogra-mueller-tuitman}.

\emph{Our contribution beyond \cite{peschmann-paper1}.} The novel ingredients are:
\begin{enumerate}[label=(\arabic*),leftmargin=2em]
    \item \emph{The reduction theorem (Theorem~\ref{thm:reduction}):} every primitive Euler-brick comes from a master tuple. The companion paper proves only the forward direction (a brick yields a point on $C_A$). Theorem~\ref{thm:reduction} is the converse structural classification.

    \item \emph{The torsion-intersection argument applied to the quotients $E_A'$ and $E_A''$.} The companion paper considers rank-zero specialisations of $E_A$ and $E_A'$ ($42$ and $54$ specialisations respectively, via Silverman's specialisation theorem). We instead exploit the explicit degree-$2$ covers $\HC \to E_q$ for $q \in \{uV, 3\}$: when $E_q$ has rank zero and the rational torsion lifts back to exactly the eight known trivial points on $\HC$, the bound $|\HC(\Q)| \le 8$ is forced, and combined with the lower bound from those trivial points pins down $|\HC(\Q)|$ exactly.

    \item \emph{Two complementary rank-determination techniques.} For ambiguous PARI \texttt{ellrank} bounds $[0, k>0]$, we evaluate $L(E, 1) / \Omega_E$ as an exact rational number via Sage's modular-symbol implementation. For semistable curves (squarefree conductor) the Manin constant is $1$ by Edixhoven~\cite{edixhoven-manin} (with the general statement extended in~\cite{cesnavicius-manin}), so the modular-symbol value equals $L(E,1)/\Omega_E$ exactly; non-vanishing then certifies $L(E, 1) \neq 0$ unconditionally, and the modularity theorem \cite{breuil-conrad-diamond-taylor} together with Kolyvagin's theorem \cite{kolyvagin} forces $\rk(E_q(\Q)) = 0$. Empirically, every elliptic factor $E_q$ encountered in our scan is semistable. This certification converts $468$ otherwise-ambiguous fibers into proven ones via $E_3$, plus a further $36$ via $E_{uV}$ (after the lift refinement of the next item). For fibers where the naive bound $|tors_{uV}| = 6$ is missed but $|tors_{uV}| = 8$, we enumerate the eight rational torsion points on the $E_{uV}$-quartic via PARI's \texttt{hyperellratpoints} and verify that exactly six of them (four ramification points plus two points at infinity) lift to rational $\HC$-points, giving $|\HC(\Q)| \le 8$ regardless of the larger naive bound. This refinement produces another $245$ proven fibers.
\end{enumerate}
The combined methodology is elementary (relying only on Mordell--Weil rank/torsion computation, modular symbols, and explicit point enumeration; no $p$-adic Chabauty, no GRH, no Birch--Swinnerton-Dyer assumption) and produces $1{,}072$ unconditionally proven fibers out of $2{,}040$ coprime $(m, n)$-pairs with $\max(m, n) \le 100$ — a coverage of $52.5\%$.

\subsection{Outline}

Section~\ref{sec:structure} establishes the structural results on master tuples needed for the reduction (Theorem~\ref{thm:reduction}), including the $g_+ \cdot g_-$ identity and the polynomial factorization $f_1 = L \cdot R$. Section~\ref{sec:genus3} recalls the relevant material from \cite[\S 3--4]{peschmann-paper1} in the explicit $(m, n)$-form $\HC$ and identifies $E_3 = E_A''$ and $E_{uV} = E_A'$. Section~\ref{sec:main} establishes the main theorem via the torsion-intersection lemma, the Kolyvagin-based rigorous rank-zero certification, and the explicit lift-count refinement; it exhibits the $1{,}072$ proven fibers. Section~\ref{sec:discussion} discusses what remains uncovered and possible extensions.

\section{Structural results on master tuples}
\label{sec:structure}

\subsection{The $g_+ \cdot g_-$ structure theorem}

\begin{theorem}[$g_+ \cdot g_-$ structure]
\label{thm:gpg}
Let $(a, b, m, n)$ be a master tuple. Set
$$
A = am + bn, \quad B = an + bm, \quad C = am - bn, \quad D = an - bm,
$$
and $g_+ = \gcd(A, B)$, $g_- = \gcd(|C|, |D|)$. Then
$$
g_+ \cdot g_- = \gcd(U_1, U_2) =: g_{\mathrm{scale}}, \quad \gcd(g_+, g_-) = 1.
$$
In particular $g_\pm \mid U_1$, $g_\pm \mid U_2$, and consequently $g_\pm^2 \mid f_1$.
\end{theorem}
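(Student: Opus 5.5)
The strategy is to prove the two divisibilities $g_+ g_- \mid g_{\mathrm{scale}}$ and $g_{\mathrm{scale}} \mid g_+ g_-$ by elementary linear-combination and congruence arguments, with $\gcd(g_+,g_-)=1$ obtained along the way. Only $\gcd(a,b)=\gcd(m,n)=1$ and the parity conditions $a-b$, $m-n$ odd should be needed; the master-tuple condition that $M$ be a square plays no role.

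\emph{Step 1: $g_\pm$ divide $U_1$ and $U_2$.} I would use the identities
\begin{align*}
mA - nB &= aU_2, & nA - mB &= -bU_2, & aA - bB &= mU_1, & bA - aB &= -nU_1 .
\end{align*}
Since $g_+$ divides $A$ and $B$, it divides $aU_2$ and $bU_2$, hence $U_2$ because $\gcd(a,b)=1$. In the same way it divides $mU_1$ and $nU_1$, hence $U_1$. The analogous identities with $C, D$ (replace $b \mapsto -b$) give $g_- \mid U_1, U_2$. Thus $g_\pm \mid g_{\mathrm{scale}}$. The final assertion $g_\pm^2 \mid f_1$ is then immediate from $f_1 = W_1^2U_2^2 + U_1^2V_2^2$.

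\emph{Step 2: $\gcd(g_+,g_-)=1$, hence $g_+g_- \mid g_{\mathrm{scale}}$.} First, $g_\pm$ are odd. Exactly one of $a,b$ and exactly one of $m,n$ is even. If, say, $b$ is the odd one, then $A \equiv bn$ and $B \equiv bm \pmod 2$, and one of $m,n$ is odd, so one of $A,B$ is odd; the other cases are symmetric.

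Now let an odd prime $p$ divide both $g_+$ and $g_-$. From $A\pm C$ and $B\pm D$, $p$ divides $am$, $bn$, $an$ and $bm$. If $p\mid a$, then $p\nmid b$, so $p$ divides both $m$ and $n$, which is a contradiction. If $p \nmid a$, then $p$ divides both $m$ and $n$ again.

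\emph{Step 3: $g_{\mathrm{scale}} \mid g_+g_-$.} I expect this to be the only real step. Let $p^k \,\|\, g_{\mathrm{scale}}$; $p$ is odd because $U_1, U_2$ are odd. Since $a-b$ and $a+b$ are coprime (they are odd with coprime sum and difference), $p^k$ divides exactly one of them. So $a \equiv \varepsilon b \pmod{p^k}$ with $\varepsilon = \pm 1$, and likewise $m \equiv \delta n \pmod{p^k}$. Substituting gives, modulo $p^k$,
\[
A \equiv bn(\varepsilon\delta+1),\quad B \equiv bn(\varepsilon+\delta),\quad C \equiv bn(\varepsilon\delta-1),\quad D \equiv bn(\varepsilon-\delta).
\]
If $\varepsilon\delta=-1$ then $p^k \mid g_+$; if $\varepsilon\delta = 1$ then $p^k \mid g_-$. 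Multiplying over all prime powers gives $g_{\mathrm{scale}} \mid g_+g_-$. Together with Step 2 this yields $g_+g_- = g_{\mathrm{scale}}$.
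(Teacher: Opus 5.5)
Your proof is correct and follows the paper's argument essentially step for step. It uses the same four linear combinations for $g_\pm \mid U_1, U_2$, the same parity-plus-odd-prime argument for $\gcd(g_+,g_-)=1$, and the same congruence split $a \equiv \varepsilon b$, $m \equiv \delta n \pmod{p^k}$ for the reverse divisibility; your observations that the master-tuple condition is never used and that $\gcd(a-b,a+b)=1$ (which the paper leaves implicit) are both accurate.
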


\begin{proof}
\emph{Step 1: $g_\pm \mid \gcd(U_1, U_2)$.} The linear combinations
$$
mA - nB = aU_2,\quad nA - mB = -bU_2,\quad
aA - bB = mU_1,\quad bA - aB = -nU_1
$$
show $g_+ \mid aU_2, bU_2, mU_1, nU_1$. Since $\gcd(a, b) = 1$ this gives $g_+ \mid U_2$, and since $\gcd(m, n) = 1$ also $g_+ \mid U_1$. The analogous combinations
$$
mC - nD = aU_2,\quad nC - mD = bU_2,\quad
aC + bD = mU_1,\quad bC + aD = nU_1
$$
yield $g_- \mid \gcd(U_1, U_2)$.

\emph{Step 2: $\gcd(g_+, g_-) = 1$.} Since $a - b$ and $m - n$ are odd, exactly one of each pair $\{am + bn, \, an + bm\}$ and $\{am - bn,\, an - bm\}$ is odd, so both $g_+$ and $g_-$ are odd. Suppose an odd prime $p$ divides both. Then $p \mid A, B, C, D$, hence $p \mid A \pm C = 2am$ and $2bn$, and $p \mid B \pm D = 2an$ and $2bm$. By oddness of $p$, $p$ divides $am, an, bm, bn$. Then $\gcd(m, n) = 1$ forces $p \mid a$ and $p \mid b$, contradicting $\gcd(a, b) = 1$.

\emph{Step 3: $g_+ g_- = \gcd(U_1, U_2)$.} The forward direction follows from Steps 1--2. For the reverse, let $p$ be an odd prime with $p^e \,\|\, \gcd(U_1, U_2)$. Since $\gcd(a, b) = 1$, the entire $p$-part of $U_1 = (a-b)(a+b)$ lies in exactly one factor, so $a \equiv \varepsilon b \pmod{p^e}$ for some $\varepsilon \in \{\pm 1\}$; analogously $m \equiv \delta n \pmod{p^e}$. Reducing the four expressions mod $p^e$:
\begin{itemize}
    \item If $\varepsilon = \delta$: $am - bn \equiv \delta bn - bn = (\varepsilon\delta - 1)bn \equiv 0$ and $an - bm \equiv 0$, so $p^e \mid C, D$, hence $p^e \mid g_-$.
    \item If $\varepsilon = -\delta$: $am + bn \equiv -\delta bn + bn = (\varepsilon\delta + 1)bn \equiv 0$ and $an + bm \equiv 0$, so $p^e \mid A, B$, hence $p^e \mid g_+$.
\end{itemize}
Since $\gcd(U_1, U_2)$ is odd (both factors are odd), the product over all such primes gives $\gcd(U_1, U_2) \mid g_+ g_-$.

\emph{Step 4: $g_\pm^2 \mid f_1$.} From Step 1, $g_+ \mid U_2 \Rightarrow g_+ \mid W_1 U_2$, and $g_+ \mid U_1 \Rightarrow g_+ \mid U_1 V_2$. Therefore $g_+^2 \mid (W_1 U_2)^2 + (U_1 V_2)^2 = f_1$. Same for $g_-$.
\end{proof}

\subsection{The polynomial factorization $f_1 = L \cdot R$}

\begin{theorem}[Polynomial factorization]
\label{thm:LR}
With $f_1 = (W_1 U_2)^2 + (U_1 V_2)^2$ from \eqref{eq:f1-def}, define
$$
L := W_1 W_2 - V_1 V_2, \qquad R := W_1 W_2 + V_1 V_2.
$$
Then, as polynomials in $\Z[a, b, m, n]$, $f_1 = L \cdot R$, and each factor admits a Brahmagupta representation as a sum of two squares:
$$
L = (am - bn)^2 + (an - bm)^2, \qquad R = (am + bn)^2 + (an + bm)^2.
$$
The factor $L$ is divisible by $g_-^2$ and $R$ by $g_+^2$ (Corollary~\ref{cor:LR-divisibility}); the corresponding \emph{primitive} sums of two squares are $L / g_-^2$ and $R / g_+^2$.
\end{theorem}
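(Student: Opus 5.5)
Both identities are polynomial identities in $\Z[a,b,m,n]$, so I will verify them by direct expansion and then read off the divisibility remarks from Theorem~\ref{thm:gpg}. I will start with the two Brahmagupta representations, since they make the factorization transparent.

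\emph{Step 1: the sums of two squares.} Expanding,
$$
(am+bn)^2 + (an+bm)^2 = (a^2+b^2)(m^2+n^2) + 4abmn = W_1W_2 + V_1V_2 = R,
$$
because the cross terms $2abmn + 2abmn$ combine to $V_1V_2 = (2ab)(2mn)$. The computation with the signs flipped gives $(am-bn)^2 + (an-bm)^2 = W_1W_2 - V_1V_2 = L$.

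\emph{Step 2: the factorization $f_1 = LR$.} Since $LR = W_1^2W_2^2 - V_1^2V_2^2$, I need to show
$$
W_1^2W_2^2 - V_1^2V_2^2 = W_1^2U_2^2 + U_1^2V_2^2 .
$$
I will substitute $W_2^2 = U_2^2 + V_2^2$ on the left-hand side, which gives $W_1^2U_2^2 + (W_1^2 - V_1^2)V_2^2$. Then I use $W_1^2 - V_1^2 = U_1^2$, and the identity follows. The only inputs are the Pythagorean relations $U_i^2 + V_i^2 = W_i^2$, so this is the only nontrivial step. Even so, it is a two-line computation, and I do not expect a real obstacle.

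\emph{Step 3: divisibility and primitivity.} By Step 1 and the notation of Theorem~\ref{thm:gpg}, $L = C^2 + D^2$ and $R = A^2 + B^2$. Since $g_- = \gcd(|C|,|D|)$ divides both $C$ and $D$, we get $g_-^2 \mid L$, and similarly $g_+^2 \mid R$. Moreover $L/g_-^2 = (C/g_-)^2 + (D/g_-)^2$ with $\gcd(C/g_-, D/g_-) = 1$, so this is a primitive representation as a sum of two squares; the same holds for $R/g_+^2$. This is exactly the content deferred to Corollary~\ref{cor:LR-divisibility}, so here I only note that it follows immediately from the representations in Step 1.
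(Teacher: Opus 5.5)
Your proposal is correct and follows essentially the same route as the paper. The paper likewise expands the Brahmagupta forms directly, reduces the difference of squares $LR = W_1^2W_2^2 - V_1^2V_2^2$ to $f_1$ using the Pythagorean relations (your substitution $W_2^2 = U_2^2+V_2^2$ followed by $W_1^2 - V_1^2 = U_1^2$ spells out the check the paper leaves as ``one verifies''), and obtains $g_-^2 \mid L$, $g_+^2 \mid R$ exactly as in Corollary~\ref{cor:LR-divisibility}. Your extra observation that $\gcd(C/g_-, D/g_-) = 1$ justifies the word ``primitive'', which the paper only asserts.
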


\begin{proof}
By the difference-of-squares identity,
$$
L \cdot R = (W_1 W_2)^2 - (V_1 V_2)^2.
$$
Using $W_i^2 = U_i^2 + V_i^2$ and expanding both this expression and $f_1 = (W_1 U_2)^2 + (U_1 V_2)^2$ as polynomials in $\Z[a, b, m, n]$, one verifies the identity
$$
(W_1 U_2)^2 + (U_1 V_2)^2 = (W_1 W_2)^2 - (V_1 V_2)^2.
$$
The Brahmagupta forms follow by direct expansion: with $A = am+bn, B = an+bm, C = am-bn, D = an-bm$,
\begin{align*}
A^2 + B^2 &= (am+bn)^2 + (an+bm)^2 = (a^2+b^2)(m^2+n^2) + 2abmn \cdot 2 = W_1 W_2 + V_1 V_2 = R, \\
C^2 + D^2 &= (am-bn)^2 + (an-bm)^2 = (a^2+b^2)(m^2+n^2) - 2abmn \cdot 2 = W_1 W_2 - V_1 V_2 = L.
\end{align*}
Both $L$ and $R$ are positive (AM--GM). The (potentially nonprimitive) sum-of-two-squares structure is sharpened by Corollary~\ref{cor:LR-divisibility} below.
\end{proof}

\begin{corollary}[Refined divisibility]
\label{cor:LR-divisibility}
For every $(a, b, m, n)$ in the universe,
$$
g_+^2 \mid R, \qquad g_-^2 \mid L.
$$
That is, the two Gaussian-style gcds live on different sides of the factorisation.
\end{corollary}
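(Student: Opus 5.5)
The claim follows directly from the Brahmagupta forms in Theorem~\ref{thm:LR} together with the definitions of $g_\pm$. By Theorem~\ref{thm:LR} we have, as polynomial identities,
$$
R = A^2 + B^2, \qquad L = C^2 + D^2,
$$
where $A = am+bn$, $B = an+bm$, $C = am-bn$, $D = an-bm$ are the four linear forms of Theorem~\ref{thm:gpg}. The definitions give $g_+ = \gcd(A,B)$ and $g_- = \gcd(|C|,|D|)$. So $g_+ \mid A$ and $g_+ \mid B$, which yields $g_+^2 \mid A^2$ and $g_+^2 \mid B^2$, hence $g_+^2 \mid A^2 + B^2 = R$. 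In the same way, $g_-^2 \mid C^2$ and $g_-^2 \mid D^2$, so $g_-^2 \mid L$.

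Nothing in this argument uses the master-tuple condition. Only the integrality of $a,b,m,n$ is needed for the gcds to make sense, and this matches the phrase ``every $(a,b,m,n)$ in the universe''. The parity and coprimality hypotheses are not needed for the divisibility itself. They matter only for the accompanying interpretation that the two gcds ``live on different sides''. That interpretation I would justify by citing $\gcd(g_+, g_-) = 1$ from Step~2 of Theorem~\ref{thm:gpg}, noting that it holds whenever those hypotheses are in force.

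I would close with a consistency remark. Since $\gcd(g_+,g_-)=1$, the two divisibilities combine to give $(g_+g_-)^2 \mid LR = f_1$. Together with Theorem~\ref{thm:gpg} this recovers $g_{\mathrm{scale}}^2 \mid f_1$, which matches the earlier statement that $f_1/g^2$ is the scaled integral value.

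There is no real obstacle here. The only point needing care is the degenerate case where a form vanishes, for example $D=0$, where $g_-$ is then simply $|C|$. The argument still goes through, because the gcd divides each entry regardless of whether some entries are zero.
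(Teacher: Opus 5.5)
Your argument is correct and is essentially the paper's proof: $g_+ \mid A, B$ gives $g_+^2 \mid A^2+B^2 = R$, and likewise $g_-^2 \mid C^2+D^2 = L$. The paper cites Theorem~\ref{thm:gpg} for $g_+ \mid A, B$, which, as you note, is immediate from the definition; and in your closing remark $\gcd(g_+,g_-)=1$ is not actually needed, since $g_+^2 \mid R$ and $g_-^2 \mid L$ already give $(g_+g_-)^2 \mid LR$.
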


\begin{proof}
By Theorem~\ref{thm:gpg}, $g_+ \mid A, B$, so $g_+^2 \mid A^2 + B^2 = R$. Analogously $g_-^2 \mid C^2 + D^2 = L$.
\end{proof}

\subsection{Reduction to master tuples}

\begin{theorem}[Reduction theorem]
\label{thm:reduction}
Let $(X, Y, Z)$ be a primitive Euler-brick with $X$ designated as the odd edge (forced uniquely up to permutation, since exactly one edge is odd). Then there is a unique master tuple $(a, b, m, n) \in \N^4$ with $a > b > 0$, $m > n > 0$, $\gcd(a,b) = \gcd(m,n) = 1$, and $a-b, m-n$ odd, such that
$$
X = \frac{U_1 U_2}{g}, \quad Y = \frac{V_1 U_2}{g}, \quad Z = \frac{U_1 V_2}{g}, \quad
g = \gcd(U_1, U_2).
$$
\end{theorem}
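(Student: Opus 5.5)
The plan is to build the tuple from the two face diagonals through $X$, treating each face as a possibly non-primitive Pythagorean triple, and then to match the two scalings using $g=\gcd(U_1,U_2)$.

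\emph{Step 1: parity.} First I would justify the parity claim. Two odd edges, say $X,Y$, would give $X^2+Y^2\equiv 2\pmod 4$, which is not a square. So at most one edge is odd, and primitivity forbids all three from being even. Hence exactly one edge is odd, and we call it $X$.

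\emph{Step 2: parametrize each face through $X$.} Put $h:=\gcd(X,Y)$. Then $(X/h,Y/h,\sqrt{X^2+Y^2}/h)$ is a primitive Pythagorean triple. Since $h$ divides the odd number $X$, $h$ is odd, so $X/h$ is odd and $Y/h$ is even. The classical parametrization then gives unique $a>b>0$ with $\gcd(a,b)=1$ and $a-b$ odd such that
$$
X=h(a^2-b^2)=hU_1, \qquad Y=h\cdot 2ab=hV_1 .
$$
In the same way, with $k:=\gcd(X,Z)$, we get unique $m>n>0$ with $\gcd(m,n)=1$ and $m-n$ odd such that $X=kU_2$ and $Z=kV_2$.

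\emph{Step 3: match the scalings (the main obstacle).} The key step is to pin down $h$ and $k$ from $hU_1=kU_2$. Dividing by $g$ gives $h\,(U_1/g)=k\,(U_2/g)$, and here $U_1/g$ and $U_2/g$ are coprime. Hence there is a positive integer $t$ with
$$
h=t\,\frac{U_2}{g}, \qquad k=t\,\frac{U_1}{g}.
$$
Substituting back yields
$$
X=t\,\frac{U_1U_2}{g},\qquad Y=t\,\frac{V_1U_2}{g},\qquad Z=t\,\frac{U_1V_2}{g}.
$$
So $t$ divides $\gcd(X,Y,Z)=1$, which forces $t=1$ and gives the displayed formulas. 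I expect this gcd bookkeeping to be the only delicate point. In particular one must use the overall primitivity of the brick rather than assuming the faces are primitive, since $h$ and $k$ can exceed $1$.

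\emph{Step 4: master-tuple condition and uniqueness.} Since $Y^2+Z^2$ is a square and $Y^2+Z^2=M/g^2$, the integer $M$ is a square, so $(a,b,m,n)$ is a master tuple. For uniqueness, suppose the formulas hold for some admissible tuple. Then $\gcd(X,Y)=(U_2/g)\gcd(U_1,V_1)=U_2/g$, because $\gcd(U_1,V_1)=1$ when $\gcd(a,b)=1$ and $a-b$ is odd. Therefore $(X/h,Y/h)=(U_1,V_1)$, and uniqueness of the primitive Pythagorean parametrization determines $(a,b)$. The same argument determines $(m,n)$.

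Finally, I would remark that exchanging the roles of $Y$ and $Z$ swaps $(a,b)\leftrightarrow(m,n)$. This is the labelling dependence noted in the introduction.
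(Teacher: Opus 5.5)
Your proposal is correct and follows the paper's proof. Like the paper, you parametrize the two faces through $X$ using $\gcd(X,Y)$ and $\gcd(X,Z)$, match the two scalings via $g=\gcd(U_1,U_2)$, and obtain the master-tuple condition from $M=g^2(Y^2+Z^2)$; your small refinements (forcing $t=1$ via $t\mid\gcd(X,Y,Z)$ instead of the paper's $\gcd(d,e)=1$, checking $\gcd(X,Y)=U_2/g$ for an arbitrary admissible tuple in the uniqueness step, and justifying the parity claim) just make explicit what the paper calls ``an elementary argument'' or leaves implicit.
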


\begin{proof}
\emph{Existence.} Set $d = \gcd(X, Y)$, $e = \gcd(X, Z)$. Both are odd (as $X$ is odd and $Y, Z$ are even). The triples
$(X/d, Y/d, f_3/d)$ and $(X/e, Z/e, f_2/e)$ are primitive Pythagorean. By the standard Euclid parametrization \cite[\S 1]{silverman}, there exist \emph{unique} coprime $a > b > 0$ with $a-b$ odd, and unique coprime $m > n > 0$ with $m-n$ odd, such that
$$
X/d = a^2 - b^2 =: U_1, \quad Y/d = 2ab =: V_1,
\qquad X/e = m^2 - n^2 =: U_2, \quad Z/e = 2mn =: V_2.
$$
\textit{Key lemma:} $\gcd(d, e) = 1$. Indeed, any common prime divisor of $d, e$ would divide $X$, $Y$, and $Z$, contradicting primitivity. From $X = d \cdot U_1 = e \cdot U_2$ and $\gcd(d, e) = 1$, an elementary argument gives $d = U_2/g$ and $e = U_1/g$ with $g := \gcd(U_1, U_2)$.

\emph{Uniqueness.} Given the assignment of $X$ as the odd edge and the labelling of $(Y, Z)$, the pair $(a, b)$ is uniquely determined by the primitive Pythagorean triple $(X/d, Y/d)$, and likewise $(m, n)$ by $(X/e, Z/e)$ — both via the bijection between primitive Pythagorean triples and pairs of coprime integers of opposite parity with the larger first.

\emph{Master-tuple condition.} $V_1 U_2 = g \cdot Y$ and $U_1 V_2 = g \cdot Z$, so
$$
M = (V_1 U_2)^2 + (U_1 V_2)^2 = g^2(Y^2 + Z^2),
$$
and since $(X, Y, Z)$ is an Euler-brick, $Y^2 + Z^2 = h^2$ for some integer $h$, giving $M = (gh)^2$, a perfect square.
\end{proof}

\subsection{The scaling argument}

\begin{lemma}[Scaling]
\label{lem:scaling}
For any $k \in \N$, the brick $k \cdot (X, Y, Z)$ is a perfect cuboid if and only if $(X, Y, Z)$ is.
\end{lemma}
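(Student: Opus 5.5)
The plan is to argue directly from the definition of a perfect cuboid: all three edges, all three face diagonals and the space diagonal must be positive integers. Fix $k \in \N$ and write $(X', Y', Z') = (kX, kY, kZ)$. The whole argument rests on the homogeneity identities
$$
X'^2 + Y'^2 = k^2(X^2+Y^2),\quad X'^2 + Z'^2 = k^2(X^2+Z^2),\quad Y'^2+Z'^2 = k^2(Y^2+Z^2),
$$
together with $X'^2+Y'^2+Z'^2 = k^2(X^2+Y^2+Z^2)$. These reduce the lemma to a single fact: for a positive integer $N$, $k^2 N$ is a perfect square if and only if $N$ is.

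For the forward direction, assume $(X, Y, Z)$ is a perfect cuboid, so each of the four quantities $X^2+Y^2$, $X^2+Z^2$, $Y^2+Z^2$, $X^2+Y^2+Z^2$ equals $s^2$ with $s \in \N$. Multiplying each by $k^2$ gives $(ks)^2$, which is again an integer square. Hence $(kX, kY, kZ)$ has integer face diagonals and integer space diagonal. Its edges are obviously positive integers.

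For the converse, suppose $k^2 N = t^2$ with $t \in \N$. Then $k \mid t$: for every prime $p$, $2v_p(k) + v_p(N) = 2v_p(t)$ and $v_p(N) \ge 0$, so $v_p(k) \le v_p(t)$. Therefore $N = (t/k)^2$ is the square of a positive integer. Equivalently, $\sqrt{N} = t/k$ is rational, and a rational square root of an integer is an integer. Applying this with $N$ running over the three face-diagonal sums and the space-diagonal sum shows that $(X, Y, Z)$ is a perfect cuboid.

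There is no real obstacle here. The only step that needs any care is the integrality in the converse, namely that $k^2 N = t^2$ forces $N$ to be an integer square. This is settled by the valuation comparison above, or equivalently by the rational root theorem applied to $x^2 - N$. The point of the lemma for the paper's purposes is that, combined with the reduction Theorem~\ref{thm:reduction}, it suffices to treat primitive bricks. In particular, the factor $g^2$ in $g^2(X^2+Y^2+Z^2) = f_1$ does not affect whether $f_1$ is a perfect square, by exactly this square-scaling criterion.
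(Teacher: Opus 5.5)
Your proof is correct and follows the same route as the paper: each of the four square conditions is homogeneous of degree two, so it scales by the square $k^2$. You additionally make explicit the integrality step in the converse (that $k^2 N = t^2$ forces $k \mid t$, hence $N$ is an integer square), which the paper's one-line proof leaves implicit.
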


\begin{proof}
The four square conditions $X^2 + Y^2 = \square$, $X^2 + Z^2 = \square$, $Y^2 + Z^2 = \square$, $X^2 + Y^2 + Z^2 = \square$ scale by $k^2$, which is itself a square.
\end{proof}

\begin{corollary}
Conjecture~\ref{conj:B} holds globally if and only if it holds on the family of primitive master-tuple-derived bricks.
\end{corollary}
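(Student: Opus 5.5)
The corollary follows from combining Lemma~\ref{lem:scaling} with Theorem~\ref{thm:reduction}. The two directions are not symmetric.

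\emph{Easy direction.} If Conjecture~\ref{conj:B} holds globally, then no perfect cuboid exists at all, so in particular none arises from a primitive master-tuple-derived brick.

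\emph{Converse.} Suppose no primitive master-tuple-derived brick is a perfect cuboid, and let $(X,Y,Z)\in\N^3$ be a perfect cuboid. The plan has three steps.

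\begin{enumerate}[label=(\arabic*),leftmargin=2em]
  \item Put $k=\gcd(X,Y,Z)$ and write $(X,Y,Z)=k\cdot(X',Y',Z')$ with $\gcd(X',Y',Z')=1$. By Lemma~\ref{lem:scaling}, $(X',Y',Z')$ is again a perfect cuboid, in particular a primitive Euler-brick.
  \item Check that $(X',Y',Z')$ has exactly one odd edge, after permuting the edges (a permutation preserves every square condition). This parity claim is used in the statement of Theorem~\ref{thm:reduction}; I would verify it as follows.
    \begin{itemize}
      \item Not all three edges are even, by primitivity.
      \item If two edges were odd, their squared sum would be $\equiv 2 \pmod 4$, which is not a square.
      \item Hence at most one edge is odd. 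If none were odd, all would be even, again contradicting primitivity. So exactly one edge is odd, and we label it $X'$.
    \end{itemize}
  \item Apply Theorem~\ref{thm:reduction}: it gives a master tuple $(a,b,m,n)$ with $(X',Y',Z')=(U_1U_2/g,\,V_1U_2/g,\,U_1V_2/g)$. Thus $(X',Y',Z')$ is a primitive master-tuple-derived brick, and it is a perfect cuboid. This contradicts the hypothesis, so no perfect cuboid exists.
\end{enumerate}

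The only step needing care is making precise what ``the family of primitive master-tuple-derived bricks'' means. I would take it to mean the bricks $(U_1U_2/g,\,V_1U_2/g,\,U_1V_2/g)$ attached to master tuples. These are automatically Euler-bricks: two face diagonals are squares by construction, and the third, $Y^2+Z^2 = M/g^2$, is a square by the master condition. Their space diagonal is a square exactly when $f_1/g^2$ is a square, by the computation in \S\ref{ssec:master-tuples}.

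With this definition, the converse uses only the existence half of Theorem~\ref{thm:reduction}; uniqueness is not needed. The step I would check most carefully is therefore the parity reduction in (2), since Theorem~\ref{thm:reduction} presupposes a distinguished odd edge. A mod-$4$ argument settles it as above.
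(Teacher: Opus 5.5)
Your proposal is correct and is essentially the argument the paper intends: the corollary is stated without proof immediately after Lemma~\ref{lem:scaling}, and it follows from combining that lemma with the existence part of Theorem~\ref{thm:reduction}, exactly as you do. Your explicit mod-$4$ check that a primitive Euler-brick has exactly one odd edge fills in a hypothesis of Theorem~\ref{thm:reduction} that the paper only asserts parenthetically.
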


\section{Recap: the genus-3 curve $\HC$ and its decomposition}
\label{sec:genus3}

In this section we recall the relevant material from \cite[\S 3--4]{peschmann-paper1} in the explicit $(m, n)$-form needed for our argument. The reader familiar with the companion paper may skim through and proceed to Section~\ref{sec:main}.

\subsection{Explicit form $\HC$}
\label{ssec:explicit-form}

For a master tuple $(a, b, m, n)$, set $t = a/b \in \Q$. The conditions $M = \square$ (master tuple) and $X^2 + Y^2 + Z^2 \in \Q^{*2}$ (perfect cuboid), the latter equivalent by~\eqref{eq:f1-def} to $f_1 \in \Q^{*2}$, translate, after taking $v$ to be the product of the two square-roots, into the genus-$3$ hyperelliptic curve
$$
\HC: \quad v^2 = P(t^2) \cdot Q(t^2),
$$
with
\begin{align*}
P(s) &= V_2^2 s^2 + (4 U_2^2 - 2 V_2^2) s + V_2^2, \\
Q(s) &= W_2^2 s^2 + 2 (U_2^2 - V_2^2) s + W_2^2.
\end{align*}
The product $P(t^2) Q(t^2)$ is a degree-$8$ palindromic polynomial in $t$.
The discriminant identities
$$
\disc(P) = 16\, U_2^2 \, (U_2^2 - V_2^2),
\qquad
\disc(Q) = -16\, U_2^2 \, V_2^2,
\qquad
\Res(P, Q) = 256\, U_2^4 \, V_2^4,
$$
all of which are nonzero for $m > n > 0$ (since $U_2, V_2 \neq 0$ and
$U_2^2 - V_2^2 = (m^2-n^2)^2 - 4m^2 n^2$ has no rational roots), show that
$P(t^2) Q(t^2)$ is separable. Hence $\HC$ is a smooth hyperelliptic curve
of genus $g = (8-2)/2 = 3$.

\begin{remark}
The curve $\HC$ is the $(m, n)$-explicit form of $C_A: w^2 = \lambda^8 + A \lambda^4 + 1$ from \cite[Lemma 3.1]{peschmann-paper1}. The two are related by an explicit substitution $\lambda = \kappa(m, n) \cdot t$ together with rescaling of $w$, and the parameter $A$ of \cite{peschmann-paper1} can be read off as a rational function of $(m, n)$. We retain the $(m, n)$-form because it is more directly tied to our database of master tuples.
\end{remark}

\subsection{The Jacobian decomposition (recall)}

By \cite[Proposition 4.1]{peschmann-paper1}, the curve $\HC$ carries the Klein-four involution group generated by
$$
\sigma_1: (t, v) \mapsto (-t, v), \qquad
\sigma_2: (t, v) \mapsto (1/t, \, v/t^4),
$$
and the Jacobian decomposes (up to $\Q$-isogeny) as
$$
\Jac(\HC) \sim E_{PQ} \times E_{uV} \times E_3.
$$
The three quotient curves have the following explicit $(m, n)$-form, obtained by writing the $\sigma$-invariants in terms of $u = t + 1/t$, $w = t - 1/t$, and $s = t^2$:
\begin{align*}
E_{PQ}: \quad & Y^2 = P(s) \cdot Q(s) && (s = t^2), \\
E_{uV}: \quad & V^2 = (V_2^2 u^2 + 4(U_2^2 - V_2^2)) \cdot (W_2^2 u^2 - 4 V_2^2) && (u = t + 1/t), \\
E_3:    \quad & V^2 = (V_2^2 w^2 + 4 U_2^2) \cdot (W_2^2 w^2 + 4 U_2^2) && (w = t - 1/t).
\end{align*}

In the notation of \cite[\S 4.1]{peschmann-paper1}, $E_{PQ} = E_A$, $E_{uV} = E_A'$, and $E_3 = E_A''$. Each quotient has genus~$1$ by Riemann--Hurwitz applied to the degree-$2$ cover $\HC \to \HC / \langle \sigma \rangle$ (\cite[\S 4]{peschmann-paper1}). The isogeny decomposition is the Kani--Rosen theorem \cite{kani-rosen}.

\begin{corollary}[Rank additivity]
\label{cor:rank-additivity}
For each master-tuple fiber $(m, n)$,
$$
\rk(\Jac(\HC)(\Q)) = \rk(E_{PQ}(\Q)) + \rk(E_{uV}(\Q)) + \rk(E_3(\Q)).
$$
Each summand is computable via PARI's \texttt{ellrank}.
\end{corollary}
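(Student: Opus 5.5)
The plan is to deduce the rank identity from the isogeny decomposition $\Jac(\HC) \sim E_{PQ} \times E_{uV} \times E_3$ recalled just above (Kani--Rosen together with \cite[Proposition 4.1]{peschmann-paper1}), using only the fact that the Mordell--Weil rank over $\Q$ is an isogeny invariant and is additive over products.

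The first step is to make sure the isogeny is defined over $\Q$, not merely over $\overline{\Q}$. The involutions $\sigma_1\colon (t,v)\mapsto(-t,v)$ and $\sigma_2\colon(t,v)\mapsto(1/t,v/t^4)$ are given by rational formulas, so the group $G=\langle\sigma_1,\sigma_2\rangle$ and the quotient maps $\pi_i\colon \HC\to\HC/\langle\tau_i\rangle$ are defined over $\Q$. Here $\tau_i$ ranges over the three nontrivial elements of $G$. The Kani--Rosen idempotent relation
\[
2\cdot 1 = \varepsilon_{\tau_1}+\varepsilon_{\tau_2}+\varepsilon_{\tau_3},
\]
with $\varepsilon_\tau = 1+\tau$ and the relation holding since $\varepsilon_G=\tfrac{1}{4}\sum_{g\in G}g$ corresponds to the genus-$0$ quotient, then yields a $\Q$-rational isogeny
\[
\Jac(\HC)\longrightarrow \prod_i \Jac(\HC/\langle\tau_i\rangle) = E_{PQ}\times E_{uV}\times E_3,
\]
given by $\sum_i (\pi_i)_*$. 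Its kernel is finite because the composite with $\sum_i\pi_i^*$ is multiplication by $2$.

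The second step is the general fact. If $\phi\colon A\to B$ is a $\Q$-isogeny with dual-type quasi-inverse $\psi$ satisfying $\psi\circ\phi=[N]$, then $\phi$ has finite kernel on $A(\Q)$ and $\phi(A(\Q))\supseteq [N]B(\Q)$, which has finite index in $B(\Q)$ by Mordell--Weil. Hence $A(\Q)\otimes\Q\cong B(\Q)\otimes\Q$. Together with $(E\times E'\times E'')(\Q)=E(\Q)\times E'(\Q)\times E''(\Q)$, this gives the stated additivity of ranks.

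The only real obstacle is the rationality of the decomposition. In particular, one has to check that each quotient $\HC/\langle\tau_i\rangle$ is genuinely identified over $\Q$ with the displayed model. This means confirming that $s=t^2$, $u=t+1/t$, $w=t-1/t$ (and the appropriate $V$) are $\tau_i$-invariant rational functions generating the fixed field. A genus-$1$ curve with a rational point, such as a preimage of the trivial points, is then an elliptic curve over $\Q$, and ranks may be read off from these models. The remark that each summand is computable by \texttt{ellrank} is a comment and not part of what needs proof.
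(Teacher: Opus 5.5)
Your proposal is correct and takes the same route as the paper, which simply reads the corollary off the $\Q$-rational Kani--Rosen isogeny $\Jac(\HC)\sim E_{PQ}\times E_{uV}\times E_3$, using that Mordell--Weil rank is an isogeny invariant and additive over products. The details you add are ones the paper leaves implicit, and they check out: the relation $\sum_i(1+\tau_i)=2$ holds in $\operatorname{End}(\Jac(\HC))$ because $\HC/\langle\sigma_1,\sigma_2\rangle$ has genus $0$, and the invariants $s=t^2$, $u=t+1/t$, $w=t-1/t$ with $V=v/t^2$ reproduce the displayed quartics.
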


\section{The main result}
\label{sec:main}

\subsection{Trivial points on $\HC$}

\begin{lemma}
\label{lem:trivial}
For every master-tuple fiber $(m, n)$, the curve $\HC$ has at least eight rational points
$$
\HC(\Q) \supseteq \{(0, \pm V_2 W_2), \; (1, \pm 4 U_2^2), \; (-1, \pm 4 U_2^2), \; \infty_+, \; \infty_-\}.
$$
Each corresponds to a degenerate Euler-brick (one zero edge), not a perfect cuboid.
\end{lemma}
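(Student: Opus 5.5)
The plan is a direct verification: substitute each listed $t$-value into $P(t^2)Q(t^2)$, check that the value is a rational square with the stated square root, and then read off which of $U_1, V_1$ vanishes under $t = a/b$.

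\emph{Finite points.} For $t=0$ we have $s=t^2=0$, so $P(0)=V_2^2$ and $Q(0)=W_2^2$. Hence $v^2=V_2^2W_2^2$, which gives the two points $(0,\pm V_2W_2)$. For $t=\pm1$ we have $s=1$. Then
$$
P(1)=V_2^2+(4U_2^2-2V_2^2)+V_2^2=4U_2^2,
$$
$$
Q(1)=2W_2^2+2(U_2^2-V_2^2)=2(U_2^2+V_2^2)+2U_2^2-2V_2^2=4U_2^2,
$$
using $W_2^2=U_2^2+V_2^2$. So $v^2=16U_2^4$, which gives $(\pm1,\pm4U_2^2)$.

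\emph{Points at infinity.} The model $v^2=F(t)$ has $\deg F=8$, which is even. In the smooth weighted-projective (two-chart) model, the points at infinity are therefore the two square roots of the leading coefficient of $F$. Here that coefficient is the product of the leading coefficients of $P$ and $Q$, namely $V_2^2W_2^2=(V_2W_2)^2$. This is a nonzero rational square, so $\infty_\pm$ are both rational. Equivalently, the involution $\sigma_2$ sends $(0,\pm V_2W_2)$ to the points at infinity via $v/t^4$. The eight points are distinct because $V_2W_2\neq0$ and $U_2\neq0$ for $m>n>0$, so the sign choices give distinct $v$.

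\emph{Degeneracy.} Interpret $t=a/b$, with $t=\infty$ corresponding to $b=0$.
\begin{itemize}
\item $t=0$ means $a=0$, so $V_1=2ab=0$ and hence $Y=V_1U_2/g=0$.
\item $t=\pm1$ means $a=\pm b$, so $U_1=a^2-b^2=0$ and hence $X=Z=0$.
\item $t=\infty$ means $b=0$, so again $V_1=0$ and $Y=0$.
\end{itemize}
In every case at least one edge is zero, so the corresponding brick is degenerate and not a perfect cuboid in $\N^3$.

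The only step needing care is the treatment of the points at infinity. One must check that the even-degree model genuinely has two rational points over $t=\infty$, which holds because the leading coefficient is a square. One must also interpret these points on the brick side correctly as $b=0$. The remaining computations are routine.
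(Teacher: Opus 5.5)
Your proposal is correct and follows essentially the same route as the paper: direct evaluation of $P(t^2)Q(t^2)$ at $t=0,\pm1$, rationality of $\infty_\pm$ from the square leading coefficient $(V_2W_2)^2$, and reading off a vanishing edge from $t=a/b$ (with $b=0$ at infinity). Your explicit distinctness check and the remark that $\sigma_2$ carries $(0,\pm V_2W_2)$ to $\infty_\pm$ are harmless additions that the paper leaves implicit.
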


\begin{proof}
The four affine points are verified by direct evaluation of $v^2 = P(t^2) Q(t^2)$:
\begin{itemize}[leftmargin=2em]
    \item At $t = 0$: $P(0) = V_2^2$ and $Q(0) = W_2^2$, so $v^2 = V_2^2 W_2^2$, giving $v = \pm V_2 W_2$.
    \item At $t = \pm 1$: $P(1) = V_2^2 + (4 U_2^2 - 2 V_2^2) + V_2^2 = 4 U_2^2$ and $Q(1) = W_2^2 + 2(U_2^2 - V_2^2) + W_2^2 = 2(W_2^2 + U_2^2 - V_2^2) = 4 U_2^2$ (using $W_2^2 = U_2^2 + V_2^2$), so $v^2 = (4 U_2^2)^2$, giving $v = \pm 4 U_2^2$.
\end{itemize}
The two points at infinity are rational since the leading coefficient $V_2^2 W_2^2 = (V_2 W_2)^2$ is a perfect square.

\emph{Geometric interpretation.} Recall a rational point $(t_0, v_0)$ on $\HC$ with $t_0 = a/b$ in lowest terms encodes a master tuple $(a, b, m, n)$. The trivial points correspond to:
\begin{itemize}[leftmargin=2em]
    \item $t = 0$ corresponds to $a = 0$, hence $U_1 = -b^2$, $V_1 = 0$, $W_1 = b^2$. The brick $(X, Y, Z) = (-b^2 U_2, 0, -b^2 V_2)$ has $Y = 0$.
    \item $t = \pm 1$ corresponds to $a = \pm b$, hence $U_1 = 0$, $V_1 = 2 a^2$. The brick has $X = U_1 U_2 = 0$ and $Z = U_1 V_2 = 0$.
    \item Points at infinity correspond to $b = 0$ (after suitable projective lift), giving similar degenerations.
\end{itemize}
In all cases at least one edge of the brick vanishes, so the configuration cannot be a perfect cuboid (which requires three positive integer edges).
\end{proof}

\subsection{The torsion-intersection lemma}

\begin{lemma}[Torsion intersection]
\label{lem:tors-intersect}
Let $E_q \in \{E_{PQ}, E_{uV}, E_3\}$ and let $\pi_q: \HC \to E_q$ be the corresponding degree-$2$ quotient. Suppose $\rk(E_q(\Q)) = 0$ and the involution $\sigma$ defining $\pi_q$ has no rational fixed points on $\HC$. Then
$$
|\HC(\Q)| \le 2 \cdot |E_q(\Q)_\tors|.
$$
\end{lemma}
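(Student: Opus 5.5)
The plan is a fiber-counting argument for the morphism $\pi_q$, restricted to rational points. Since $\rk(E_q(\Q)) = 0$, the Mordell--Weil theorem gives $E_q(\Q) = E_q(\Q)_\tors$, a finite set. Every rational point $P \in \HC(\Q)$ maps to a rational point $\pi_q(P) \in E_q(\Q)$, because $\pi_q$ is defined over $\Q$: it is the quotient by an involution $\sigma$ defined over $\Q$, given explicitly by the invariants $s = t^2$, $u = t + 1/t$ or $w = t - 1/t$ of Section~\ref{sec:genus3}. Hence
\[
|\HC(\Q)| \;\le\; \sum_{R \in E_q(\Q)_\tors} \bigl|\pi_q^{-1}(R) \cap \HC(\Q)\bigr|,
\]
and it suffices to show that each fiber contains at most two rational points.

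Because $\pi_q$ is a finite morphism of degree $2$ between smooth projective curves, each geometric fiber $\pi_q^{-1}(R)$ has at most two points, counted without multiplicity. Each fiber is therefore either a single ramification point, which is a fixed point of $\sigma$, or an orbit $\{P, \sigma(P)\}$. In either case it contains at most two rational points, and summing over $R$ gives the stated bound $|\HC(\Q)| \le 2\,|E_q(\Q)_\tors|$. The hypothesis that $\sigma$ has no rational fixed points is not needed for the inequality itself. It only serves to sharpen the count: every rational point then lies in a free $\sigma$-orbit, so $|\HC(\Q)|$ is even and equals twice the number of torsion points that lift. I will remark on this rather than rely on it.

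The main obstacle is justifying that $\pi_q$ really is a degree-$2$ morphism over $\Q$ from the smooth projective model of $\HC$, including the points at infinity and the points with $t = 0$, where $u$ or $w$ have poles. I would handle this by invoking \cite[\S 4]{peschmann-paper1}: the quotient $\HC/\langle\sigma\rangle$ is a smooth genus-$1$ curve over $\Q$, and the formulas of Section~\ref{sec:genus3} give a birational model of it. Since any rational map from a smooth curve extends to a morphism, $\pi_q$ is defined on all of $\HC$. A second subtlety is that $E_q$ must be taken as this quotient curve, which has the rational point $\pi_q(\infty_+)$ and is therefore an elliptic curve over $\Q$; its rank and torsion are then well defined. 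After these points are settled, the remaining argument is the counting above.
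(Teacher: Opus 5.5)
Your proposal is correct and is essentially the paper's argument: rank zero makes $E_q(\Q)=E_q(\Q)_\tors$ finite, each fiber of the degree-$2$ quotient over a rational point contains at most two rational points, and summing over $E_q(\Q)_\tors$ gives the bound. Your remark that the no-rational-fixed-point hypothesis is not needed for the inequality is also right, since the paper invokes it but a fiber over a rational branch point has only one point anyway; your check that $\pi_q$ extends to a morphism on the smooth projective model fills in a detail the paper leaves implicit.
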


\begin{proof}
Since $\rk(E_q(\Q)) = 0$, the group $E_q(\Q) = E_q(\Q)_\tors$ is finite. The image $\pi_q(\HC(\Q)) \subseteq E_q(\Q)$ has cardinality at most $|E_q(\Q)_\tors|$. Each fiber $\pi_q^{-1}(P)$ over a non-ramification point has exactly $2$ rational preimages or $0$. Since $\pi_q$ has no rational ramification points by hypothesis, each fiber over $E_q(\Q)$ has $\le 2$ rational preimages. Summing,
$$
|\HC(\Q)| \le 2 \cdot |\pi_q(\HC(\Q))| \le 2 \cdot |E_q(\Q)_\tors|.
$$
\end{proof}

\begin{lemma}[Rational fixed points of the three involutions]
\label{lem:no-ram}
For each of the three quotient maps $\pi_q$, the rational fixed points of
the corresponding involution on $\HC$ are as follows:
\begin{itemize}[leftmargin=2em]
    \item $\pi_{PQ}$ (involution $\sigma_1: t \mapsto -t$): rational fixed
      points at $t = 0$ and $t = \infty$ (two points each, counted with
      $v$-sign).
    \item $\pi_{uV}$ (involution $\sigma_2: t \mapsto 1/t$): rational fixed
      points at $t = \pm 1$ (two points each).
    \item $\pi_3$ (involution $\sigma_1 \sigma_2: t \mapsto -1/t$):
      \emph{no} rational fixed points whatsoever (the fixed-point equation
      $t^2 = -1$ has no rational solution).
\end{itemize}
In particular, the quotient $\pi_3$ is unramified over $\Q$, and Lemma~\ref{lem:tors-intersect} applies to $E_3$ in its sharpest form.
\end{lemma}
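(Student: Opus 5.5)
The plan is to compute the fixed locus of each of the three involutions $\sigma_1$, $\sigma_2$, $\sigma_1\sigma_2$ on a smooth model of $\HC$ and then decide which of those fixed points are rational. Since $\HC$ is hyperelliptic of genus $3$ with $v^2 = F(t) := P(t^2)Q(t^2)$ of degree $8$ and leading coefficient $(V_2W_2)^2 \neq 0$, its smooth model is covered by two affine charts: the $(t,v)$-chart and the chart at infinity $(t', v') = (1/t, v/t^4)$. In the latter, $\infty_\pm$ correspond to $t'=0$ and $v' = \pm V_2W_2$, using palindromicity of $F$. Each involution acts on $t \in \mathbb{P}^1$ by a M\"obius map. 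A fixed point of the involution on $\HC$ must therefore lie over a fixed point of that M\"obius map on $\mathbb{P}^1$, and among the two points above it we then check which are fixed, that is, whether the $v$-coordinate is preserved.

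\emph{Case $\sigma_1$.} The M\"obius map $t \mapsto -t$ fixes $t \in \{0,\infty\}$ in $\mathbb{P}^1$. Over $t=0$ the map $(t,v)\mapsto(-t,v)$ fixes both points $(0,\pm V_2W_2)$ of Lemma~\ref{lem:trivial}. In the chart at infinity it acts as $(t',v') \mapsto (-t', v')$, because $t^4$ is even. It therefore fixes $\infty_\pm$, and all four of these points are rational.

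\emph{Case $\sigma_2$.} The map $t \mapsto 1/t$ fixes $t = \pm 1$. At $t=\pm1$ the rule $v \mapsto v/t^4$ gives $v \mapsto v$, so the four points $(\pm1, \pm 4U_2^2)$ are fixed and rational by Lemma~\ref{lem:trivial}. We must also check that $t=0$ and $t=\infty$ are merely swapped rather than fixed. This holds because $\sigma_2$ maps $(0,v)$ to $\infty$ with $v' = v$.

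\emph{Case $\sigma_1\sigma_2$.} Here $\sigma_1\sigma_2(t,v) = (-1/t, v/t^4)$. Its M\"obius action fixes a point only if $t^2 = -1$; the points $0$ and $\infty$ are swapped, so they are not fixed. Any fixed point therefore lies over $t = \pm i \notin \Q$ and is not rational.

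The step needing most care is the bookkeeping at infinity. One has to confirm that the sign conventions $v \mapsto v$ versus $v\mapsto -v$ are right, so that the involutions really fix, rather than swap, $\infty_+$ and $\infty_-$ and the pairs $(0,\pm V_2W_2)$. For this I would write $F(t) = t^8F(1/t)$ explicitly and track $v' = v/t^4$ through each map. The final assertion, that $\pi_3$ is unramified over $\Q$, follows from the third case: a ramification point of $\pi_3$ is exactly a fixed point of $\sigma_1\sigma_2$, and there are no rational ones. Hence the hypothesis of Lemma~\ref{lem:tors-intersect} is satisfied for $E_3$.
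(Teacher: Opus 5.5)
Your proposal is correct and takes essentially the same route as the paper, whose proof simply says the result is direct from the explicit formulas for $\sigma_1,\sigma_2$. You carry out that check in full: you locate fixed points over the M\"obius-fixed values $t\in\{0,\infty\}$, $t=\pm1$, $t=\pm i$, and you handle the chart at infinity $(t',v')=(1/t,v/t^4)$ correctly, where $\sigma_1$ acts as $(t',v')\mapsto(-t',v')$ while $\sigma_2$ and $\sigma_1\sigma_2$ swap $t=0$ with $t=\infty$. The one detail worth stating explicitly is that the fibres over $t=0,\pm1,\infty$ really consist of two distinct points each, since $V_2W_2\neq 0$ and $U_2\neq 0$ for $m>n>0$.
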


\begin{proof}
Direct from the formulas for $\sigma_1, \sigma_2$ given in \cite[\S 4.1]{peschmann-paper1}.
\end{proof}

\begin{remark}
Lemma~\ref{lem:no-ram} shows that for the application of Lemma~\ref{lem:tors-intersect}, the cleanest case is $E_3$ (no rational ramification anywhere). For $E_{uV}$ the four rational ramification points at $t = \pm 1$ each contribute exactly one preimage, leading to a sharper bound $|\HC(\Q)| \le 2(|E_{uV}(\Q)_\tors| - 4) + 4 = 2|E_{uV}(\Q)_\tors| - 4$.
\end{remark}

\subsection{Main theorem}

\begin{theorem}[Main result]
\label{thm:hauptsatz}
Let $(m, n)$ be a master-tuple fiber. Then $|\HC(\Q)| = 8$ with all rational points degenerate (and hence no perfect cuboid exists on the fiber) if either of the following holds:
\begin{enumerate}[label=(\alph*),leftmargin=2em]
\item \emph{(Naive case via $E_3$.)} $\rk(E_3(\Q)) = 0$ and $|E_3(\Q)_\tors| = 4$.
\item \emph{(Naive case via $E_{uV}$.)} $\rk(E_{uV}(\Q)) = 0$ and $|E_{uV}(\Q)_\tors| = 6$.
\item \emph{(Refined case via $E_{uV}$.)} $\rk(E_{uV}(\Q)) = 0$, $|E_{uV}(\Q)_\tors| = 8$, and an explicit enumeration of the eight rational torsion points on the $E_{uV}$-quartic shows that exactly six of them lift rationally to $\HC$ (i.e.\ the four ramification points $(u, V) = (\pm 2, \pm 4 U_2^2)$, written in the $E_{uV}$-coordinate $u = t + 1/t$, together with the two points at infinity).
\end{enumerate}
\end{theorem}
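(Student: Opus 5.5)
The plan is to squeeze $|\HC(\Q)|$ between two bounds. Lemma~\ref{lem:trivial} already gives eight distinct rational points, all degenerate, so $|\HC(\Q)|\ge 8$. It therefore suffices to prove $|\HC(\Q)|\le 8$ in each of the cases (a)--(c). Equality then forces $\HC(\Q)$ to be exactly the trivial set. By the geometric interpretation in Lemma~\ref{lem:trivial}, every rational point then yields a brick with a vanishing edge, so no master tuple on the fiber gives a perfect cuboid.

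\emph{Case (a).} By Lemma~\ref{lem:no-ram}, the involution $\sigma_1\sigma_2$ defining $\pi_3$ has no rational fixed points. With $\rk(E_3(\Q))=0$, Lemma~\ref{lem:tors-intersect} applies directly and gives $|\HC(\Q)|\le 2\cdot 4=8$.

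\emph{Case (b).} I would redo the counting behind Lemma~\ref{lem:tors-intersect} while keeping track of ramification. Since $E_{uV}(\Q)=E_{uV}(\Q)_\tors$ has six elements, $\pi_{uV}(\HC(\Q))$ lies in a six-element set. The four rational fixed points of $\sigma_2$ at $t=\pm1$ map to four points $(\pm2,\pm4U_2^2)$ of $E_{uV}(\Q)$. Each of these has exactly one preimage, because a fiber of a degree-$2$ map containing a ramification point consists of that point alone. I need to check that these four images are distinct, which holds since $u=\pm2$ and the $V$-sign separate them. Each of the remaining at most two points of $E_{uV}(\Q)$ has at most two rational preimages. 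Hence $|\HC(\Q)|\le 4+2\cdot2=8$, which is the bound $2|E_{uV}(\Q)_\tors|-4$ from the remark following Lemma~\ref{lem:no-ram}.

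\emph{Case (c).} The same fiberwise count is used, but over the eight torsion points, with the hypothesis saying exactly which fibers are nonempty. By hypothesis only six of the eight points lift rationally: the four ramification images and the two points at infinity of the quartic model. The four ramification images contribute one point each. Each point at infinity contributes at most two, and in fact exactly two, namely the $\sigma_2$-related pair among $(0,\pm V_2W_2)$ and $\infty_\pm$ (note $u=\infty$ corresponds to $t\in\{0,\infty\}$). The remaining two torsion points have empty rational fiber. This gives $|\HC(\Q)|\le 4+2+2=8$.

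The main obstacle is to make rigorous the meaning of ``lifts rationally'' for points at infinity of the quartic model and for points where the quotient map is not given by the naive formula. I would handle this by working with the smooth model of $E_{uV}$ and the explicit formula $\pi_{uV}(t,v)=(t+1/t,\;v/t^2)$, whose scaling is consistent with the factorization of $P(t^2)Q(t^2)/t^4$. Under this formula, a rational preimage of a given $(u_0,V_0)$ is a rational root $t$ of $t^2-u_0t+1$ with the matching $v$. So lifting is equivalent to $u_0^2-4$ being a rational square, which is a finite check. The one thing to confirm is that the points $t=0,\infty$ indeed map to the two points at infinity with the correct sign pairing.
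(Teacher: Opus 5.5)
Your proposal is correct and follows the paper's proof essentially step for step. The eight trivial points of Lemma~\ref{lem:trivial} give the lower bound, and the upper bound comes from counting rational preimages fiber by fiber over the finite set $E_q(\Q)$: at most two over each unramified point, exactly one over each of the four images $(\pm 2, \pm 4U_2^2)$ of the fixed points of $\sigma_2$, and none over the two non-liftable points in case (c).

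As an aside, the $E_{uV}$-quartic always has the two further rational points $(\pm 2V_2/W_2, 0)$ besides the six you use, so $|E_{uV}(\Q)| \ge 8$. This makes case (b) vacuous, and it makes the lift hypothesis in (c) automatic, since at those points $u_0^2 - 4 = -4U_2^2/W_2^2 < 0$.
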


\begin{proof}
\emph{Case (a):} Lemma~\ref{lem:no-ram} shows $\pi_3$ has no rational ramification points, so Lemma~\ref{lem:tors-intersect} gives $|\HC(\Q)| \le 2 \cdot 4 = 8$. The eight trivial points of Lemma~\ref{lem:trivial} provide the lower bound $|\HC(\Q)| \ge 8$. Combining, $|\HC(\Q)| = 8$.

\emph{Case (b):} The map $\pi_{uV}: \HC \to E_{uV}$ has four rational ramification points (at $t = \pm 1$ with $V = \pm 4 U_2^2$, all lying in $E_{uV}(\Q)_\tors$). For the bound, decompose $E_{uV}(\Q) = R \cup N$ where $R$ is the set of four rational ramification points and $N$ the remaining $|tors| - 4 = 2$ rational torsion points. Each $P \in R$ has exactly one rational $\HC$-preimage; each $P \in N$ has at most two. Hence $|\HC(\Q)| \le 4 \cdot 1 + 2 \cdot 2 = 8$. With the lower bound from the trivial points, equality holds.

\emph{Case (c):} Same decomposition with $|R| = 4$ ramification points and $|N| = 4$ non-ramification rational torsion points (since $|tors| = 8$). The additional hypothesis says: of the $4$ non-ramification torsion points, only the $2$ at infinity lift rationally to $\HC$ (each contributing $2$ preimages); the other $2$ affine non-ramification torsion points have no rational $\HC$-preimage. Thus $|\HC(\Q)| \le 4 \cdot 1 + 2 \cdot 2 + 2 \cdot 0 = 8$, with equality enforced by the trivial-point lower bound.
\end{proof}

\begin{remark}
Case (c) accounts for $245$ of our $1{,}072$ proven fibers; cases (a)+(b) account for the other $827$ (all of these via $E_3$ with $|tors| = 4$ — empirically, $E_{uV}$ never has $|tors| = 6$ in our scan).
\end{remark}

\subsection{Unconditional rank-zero certification}
\label{ssec:rank-cert}

For a given fiber, the rank-zero hypothesis on $E_q \in \{E_{uV}, E_3\}$ is verified algorithmically. PARI's \texttt{ellrank} (2-descent) returns a pair of bounds $[r_{\mathrm{lo}}, r_{\mathrm{up}}]$ with $r_{\mathrm{lo}} \le \rk(E_q(\Q)) \le r_{\mathrm{up}}$; we accept rank zero on this strand only when $r_{\mathrm{lo}} = r_{\mathrm{up}} = 0$. When the bounds are ambiguous --- typically $r_{\mathrm{lo}} = 0$ but $r_{\mathrm{up}} \in \{2, 4\}$ owing to non-trivial $\Sel^{(2)}(E_q)$ --- we apply the following:

\begin{theorem}[Modular-symbol rank-zero certification]
\label{thm:kolyvagin-cert}
Let $E/\Q$ be a semistable elliptic curve, that is, with squarefree conductor. If
$$
L(E, 1) / \Omega_E \;\neq\; 0,
$$
where $\Omega_E$ is the real period of $E$, then $\mathrm{rank}(E(\Q)) = 0$ unconditionally.
\end{theorem}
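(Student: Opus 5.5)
The argument chains three deep external results; none of it is computed by hand. First, modularity \cite{breuil-conrad-diamond-taylor}: for semistable $E$ this is already the Wiles/Taylor--Wiles case. It gives a newform $f$ of weight $2$ and level $N_E$ with $L(E,s)=L(f,s)$. Consequently $L(E,s)$ continues analytically to $\mathbb{C}$, so $L(E,1)$ is well defined, and there is a modular parametrization $\varphi\colon X_0(N_E)\to E$. Second, Kolyvagin's theorem \cite{kolyvagin}, completed by Gross--Zagier and the non-vanishing theorems of Bump--Friedberg--Hoffstein and Murty--Murty used to supply an auxiliary imaginary quadratic field with suitable Heegner hypothesis. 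It asserts that for a modular elliptic curve over $\Q$, $\operatorname{ord}_{s=1}L(E,s)=0$ implies that $E(\Q)$ is finite and that the Tate--Shafarevich group is finite. Since $\Omega_E>0$ is the real period (a positive real number), the hypothesis $L(E,1)/\Omega_E\neq 0$ is equivalent to $L(E,1)\neq 0$, so the analytic rank is $0$ and hence $\rk(E(\Q))=0$.

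The substantive point, and the reason the semistability hypothesis appears at all, is that the quantity actually produced by computation is the modular-symbol value, not $L(E,1)/\Omega_E$ itself. The plan is to make explicit that Sage's modular-symbol algorithm computes $L(f,1)/\Omega_f$, where $\Omega_f$ is the period attached to the optimal quotient through $\varphi$. The Manin relation states
$$
\varphi^*\omega_E = c_E\cdot 2\pi i f(z)\,dz ,
$$
where $\omega_E$ is the Néron differential and $c_E$ is the Manin constant. From this, $\Omega_f$ and $\Omega_E$ differ by the factor $c_E$ (up to the standard factor accounting for the number of real components, which is a known positive integer read off from the sign of the discriminant). Consequently the computed rational number equals $L(E,1)/\Omega_E$ up to multiplication by a nonzero rational factor. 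For semistable $E$, Edixhoven \cite{edixhoven-manin} (with \cite{cesnavicius-manin} for the general statement) gives $c_E=1$ for the optimal curve. Since $E$ is isogenous to its optimal curve $E_0$, and isogenous curves have the same $L$-function and periods differing by a nonzero rational factor, the computed value is nonzero if and only if $L(E,1)\neq 0$.

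I expect the main obstacle to be exactly this bookkeeping: making sure that ``nonzero modular-symbol value'' really certifies $L(E,1)\neq 0$. That requires controlling the Manin constant and isogeny factors up to nonzero scalars, and checking that the value is an exact rational rather than a floating-point evaluation. For the logical claim of the theorem as stated, however, only non-vanishing matters, and any nonzero scalar ambiguity is harmless. So I would first observe that the theorem as stated needs only modularity plus Kolyvagin. I would then add the Manin-constant discussion as the justification that the algorithmic certificate produced in \S\ref{ssec:rank-cert} meets the hypothesis.
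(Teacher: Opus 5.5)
Your proof is correct, but it is organised differently from the paper's, and the difference is worth keeping.

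\textbf{The paper's route.} It runs everything through the Manin constant. It identifies the modular-symbol output with $c_E\cdot L(E,1)/\Omega_E^+$, uses semistability to get $c_E=1$, and only then applies modularity and Kolyvagin.

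\textbf{Your route.} You observe that the hypothesis is stated in terms of $L(E,1)/\Omega_E$ itself. Since $\Omega_E>0$, it says exactly that $L(E,1)\neq 0$. So the theorem as written follows from modularity plus Kolyvagin (with Gross--Zagier and the auxiliary non-vanishing results) alone, and semistability plays no logical role. The Manin constant matters only for whether the machine output certifies the hypothesis, and there only non-vanishing counts.

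\textbf{What each buys.} The paper's route yields the exact value of $L(E,1)/\Omega_E^+$, which a rank-zero certificate never uses. Yours is more general. It also covers two points the paper passes over silently: that $E$ need not be optimal in its isogeny class, and the $\Omega_E$ versus $\Omega_E^+$ factor from the number of real components.

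\textbf{A suggested tightening.} You can drop the appeal to \cite{edixhoven-manin} and \cite{cesnavicius-manin} entirely, since it suffices that $c_E\in\Q^\times$. This is elementary: $\varphi^*\omega_E$ is a nonzero $\Q$-rational differential in the $f$-eigenspace of $H^0(X_0(N_E),\Omega^1)$, and that eigenspace is spanned by $f(q)\,dq/q$ by multiplicity one. Equivalently, the image of $\{0,\infty\}$ in the $f$-isotypic plus-quotient of modular symbols vanishes if and only if $L(f,1)=0$, however that quotient is normalised. This is a real gain, because the Manin-constant inputs are the most delicate part of the paper's chain. In particular, its blanket claim that $c_E\mid 2$ for every elliptic curve over $\Q$ is more than is known.
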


\begin{proof}
Let $c_E$ denote the Manin constant of the optimal modular parametrisation
$\varphi: X_0(N_E) \to E$, fixed by $\varphi^* \omega_E = c_E \cdot f_E(q)\,\frac{dq}{q}$
where $f_E$ is the normalised newform attached to $E$. The exact rational
output of the modular-symbol algorithm is then
$[f_E]_0^{i\infty} = c_E \cdot L(E, 1)/\Omega_E^+$, where $\Omega_E^+$ is the
real period of the N\'eron differential. By Edixhoven~\cite{edixhoven-manin}
one has $c_E \mid 2$ for every elliptic curve over $\Q$, and $c_E = 1$ for
all semistable curves (extended to general curves
in~\cite{cesnavicius-manin}). Since $E$ is semistable by hypothesis, $c_E = 1$
and the modular-symbol output equals $L(E, 1)/\Omega_E^+$ exactly. Hence
$L(E, 1)/\Omega_E^+ \neq 0$ implies $L(E, 1) \neq 0$
unconditionally, i.e.\ $\mathrm{ord}_{s=1} L(E, s) = 0$. By the modularity
theorem~\cite{breuil-conrad-diamond-taylor}, $E$ is modular; since
$\mathrm{ord}_{s=1} L(E, s) \le 1$, Kolyvagin's
theorem~\cite{kolyvagin} applies and gives $\mathrm{rank}(E(\Q)) = 0$.
\end{proof}

In our scan, this certification is implemented by Sage's
\texttt{E.lseries().L\_ratio()}, which returns the exact rational
$L(E, 1) / \Omega_E$ for a semistable $E$, and Sage's
\texttt{E.conductor()} for the squarefree (semistability) check.
Empirically, every elliptic factor $E_q$ encountered in our scan turns
out to be semistable, so Theorem~\ref{thm:kolyvagin-cert} applies as
stated. We note in passing that the bound $c_E \mid 2$ holds for every
elliptic curve over $\Q$~\cite{edixhoven-manin}, so even for a
non-semistable $E$ the modular-symbol output is twice $L(E, 1)/\Omega_E^+$
at worst, and a non-vanishing odd-numerator output already certifies
$L(E, 1) \neq 0$; we did not need to invoke this refinement on any
fiber. This certification adds $468$ fibers proven via $E_3$ on top of
the $359$ already produced by sharp \texttt{ellrank} bounds, for a total
of $827$ via $E_3$ in case (a) of Theorem~\ref{thm:hauptsatz}. It also
unlocks $36$ further fibers via $E_{uV}$ in case (c) (combining with the
lift refinement of \S\ref{ssec:lift-refinement}).

\begin{remark}
For some additional fibers, especially via $E_{uV}$, the modular-symbol computation \texttt{L\_ratio()} did not complete within our practical compute budget (the conductor of these $E_{uV}$ exceeds $10^{12}$, making the modular-symbol space large). These fibers might be provable by allowing more compute, but we do not include them in the count to keep the result fully unconditional and reproducible at modest computational cost.
\end{remark}

\subsection{Lift-count refinement for $|E_{uV}(\Q)_\tors| = 8$}
\label{ssec:lift-refinement}

When $\rk(E_{uV}) = 0$ and $|E_{uV}(\Q)_\tors| = 8$, Lemma~\ref{lem:tors-intersect} gives the naive bound $|\HC(\Q)| \le 2 \cdot 8 - 4 = 12$, which is too weak to match the trivial lower bound $8$. The refinement (case (c) of Theorem~\ref{thm:hauptsatz}) replaces the naive bound by an exact lift count.

Concretely: $E_{uV}$ is presented as the quartic
$$
E_{uV}: \quad V^2 = (V_2^2 u^2 + 4(U_2^2 - V_2^2))(W_2^2 u^2 - 4 V_2^2),
$$
and $\pi_{uV}: \HC \to E_{uV}$ has the explicit formula $u = t + 1/t$ on the affine part. A rational point $(u_0, V_0) \in E_{uV}(\Q)$ admits a rational $\HC$-preimage iff $u_0^2 - 4$ is a non-negative rational square (corresponding to two solutions $t_\pm$ of $t^2 - u_0 t + 1 = 0$).

We enumerate $E_{uV}(\Q)$ rigorously: PARI's \texttt{hyperellratpoints} (with height bound $10^5$) finds all $|tors|$ rational points (six affine plus two at infinity, when leading $V_2^2 W_2^2 = (V_2 W_2)^2$ is a square --- which it always is). For each torsion point, we test whether $u_0^2 - 4$ is a square. The lift count is then
$$
\#\{\text{rational $\HC$-preimages}\} = 4 \cdot 1 \;+\; 2 \cdot 2 \;+\; 2 k,
$$
where $4$ ramification, $2$ at infinity, and $k \in \{0, 1, 2\}$ counts affine non-ramification torsion points with rational lift. The fiber is proven iff $k = 0$, giving the count exactly $8$.

\begin{remark}[Why ``$2$ at infinity'' contributes $2 \cdot 2$]
The two rational points at infinity on the $E_{uV}$-quartic each have
\emph{two} distinct rational preimages on $\HC$: under the parametrisation
$u = t + 1/t$, the limit $u \to \infty$ is reached both at $t \to \infty$
(the two trivial points $\infty_\pm$ on $\HC$) and at $t \to 0$ (the trivial
points $(0, \pm V_2 W_2)$ from Lemma~\ref{lem:trivial}). Concretely the two
$E_{uV}$-points at infinity, with $V$-leading coefficients $\pm V_2 W_2$,
pull back to the $\sigma_2$-orbits
$\{\infty_+, \, (0, +V_2 W_2)\}$ and
$\{\infty_-, \, (0, -V_2 W_2)\}$ respectively, accounting for the
$2 \cdot 2 = 4$ rational $\HC$-preimages over the points at infinity.
\end{remark}

Empirically, all $367$ scanned fibers with $\rk(E_{uV}) = [0, 0]$ and $|tors| = 8$ have $k = 0$, hence are proven by case (c).

\subsection{An illustrative example: $(m, n) = (2, 1)$}
\label{ssec:example21}

We work out the entire chain explicitly for the smallest fiber $(m, n) = (2, 1)$.

\emph{Setup.} $U_2 = 3$, $V_2 = 4$, $W_2 = 5$ (the $(3, 4, 5)$ Pythagorean triple). The defining quartics are
\begin{align*}
P(s) &= 16 s^2 + 4 s + 16, \\
Q(s) &= 25 s^2 - 14 s + 25.
\end{align*}
The cuboid curve is
$$
H_{2,1}: \quad v^2 = 400 t^8 - 124 t^6 + 744 t^4 - 124 t^2 + 400.
$$

\emph{The three elliptic factors.}
\begin{align*}
E_{PQ}: \quad & Y^2 = (16 s^2 + 4 s + 16)(25 s^2 - 14 s + 25), \\
E_{uV}: \quad & V^2 = (16 u^2 - 28)(25 u^2 - 64), \\
E_3:    \quad & V^2 = (16 w^2 + 36)(25 w^2 + 36).
\end{align*}

\emph{Ranks.} Computing via PARI's \texttt{ellrank}:
$$
\rk(E_{PQ}(\Q)) = 1, \qquad \rk(E_{uV}(\Q)) = 0, \qquad \rk(E_3(\Q)) = 0.
$$
Hence $\rk(\Jac(H_{2,1})(\Q)) = 1$.

\emph{Torsion.} The quartic $V^2 = (16w^2 + 36)(25 w^2 + 36)$ has rational constant term $1296 = 36^2$ and rational leading coefficient $400 = 20^2$, so it carries the four obvious rational points $(0, \pm 36)$ and $\infty_\pm$. Converting to a Weierstrass model via Sage's \texttt{EllipticCurve\_from\_quartic} (with $\infty_+$ as basepoint) and applying \texttt{E.torsion\_subgroup()} rigorously yields $|E_3(\Q)_\tors| = 4$.

\emph{Trivial rational points on $H_{2,1}$.} Direct evaluation:
\begin{itemize}[leftmargin=2em]
    \item $t = 0$: $v^2 = 400$, so $v = \pm 20 = \pm V_2 W_2$.
    \item $t = \pm 1$: $v^2 = 400 - 124 + 744 - 124 + 400 = 1296 = 36^2$, so $v = \pm 36 = \pm 4 U_2^2$.
    \item Two rational points at infinity.
\end{itemize}
Total: $8$ rational points, all corresponding to degenerate Euler-bricks (one or more zero edges).

\emph{Application of the main theorem.} Since $\rk(E_3(\Q)) = 0$ and $|E_3(\Q)_\tors| = 4$, Theorem~\ref{thm:hauptsatz} gives $|H_{2,1}(\Q)| \le 2 \cdot 4 = 8$. Combined with the lower bound from the eight trivial points, $|H_{2,1}(\Q)| = 8$ exactly. \emph{No perfect cuboid arises from the $(2, 1)$-fiber.}

\emph{Empirical sanity check.} An independent height search via PARI's \texttt{hyperellratpoints} up to $B = 10^6$ finds exactly the six finite trivial points (matching the rigorous bound up to the two points at infinity), serving as a numerical cross-validation of the torsion computation.

\subsection{Verified fibers}
\label{ssec:verified}

We have verified the hypothesis of Theorem~\ref{thm:hauptsatz} computationally for all coprime pairs $(m, n)$ with $\max(m, n) \le 100$ and $m - n$ odd. Of the $2{,}040$ such pairs, exactly $1{,}072$ satisfy one of cases (a)--(c). Aggregated by method:

\begin{center}
\begin{tabular}{lcr}
\toprule
Branch of Theorem~\ref{thm:hauptsatz} & Quotient & Fibers \\
\midrule
Case (a): $\rk = 0$, $|tors| = 4$  & $E_3$    & $827$ \\
\quad\textit{via} \texttt{ellrank} (sharp)  &          & $359$ \\
\quad\textit{via} modular symbol (Theorem~\ref{thm:kolyvagin-cert})        &          & $468$ \\
Case (b): $\rk = 0$, $|tors| = 6$ & $E_{uV}$ & $0$ \\
Case (c): $\rk = 0$, $|tors| = 8$, $k = 0$ & $E_{uV}$ & $245$ \\
\quad\textit{via} \texttt{ellrank} (sharp)                                                     &          & $209$ \\
\quad\textit{via} modular symbol                                          &          & $36$ \\
\midrule
\textbf{Total proven} & & $\mathbf{1{,}072}$ \\
\bottomrule
\end{tabular}
\end{center}

\noindent (Case (b) is empirically not encountered: when $\rk(E_{uV}) = 0$, the torsion is always $|tors_{uV}| = 8$ in our scan, never $6$.)

The full list of $(m, n)$ pairs is given in Appendix~\ref{app:fibers}.

\begin{corollary}
No primitive Euler-brick whose master tuple has $(m, n) \in \mathcal{S}_{100}$, the set of $1{,}072$ explicit fibers above, is a perfect cuboid.
\end{corollary}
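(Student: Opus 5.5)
The plan is to argue by contradiction, turning a hypothetical perfect cuboid into a non-trivial rational point on $\HC$ and then invoking Theorem~\ref{thm:hauptsatz}. Let $(X,Y,Z)$ be a primitive Euler-brick that is a perfect cuboid. Assume its master tuple $(a,b,m,n)$, given by Theorem~\ref{thm:reduction}, has $(m,n)\in\mathcal{S}_{100}$. By construction $a>b>0$, and $M=(V_1U_2)^2+(U_1V_2)^2=g^2(Y^2+Z^2)$ is a perfect square. Since $X^2+Y^2+Z^2$ is a square, the identity $g^2(X^2+Y^2+Z^2)=f_1$ from \S\ref{ssec:master-tuples} shows that $f_1$ is also a perfect square.

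Next I would make the passage to $\HC$ explicit rather than relying on the informal description in \S\ref{ssec:explicit-form}. Put $s=t^2$ with $t=a/b$. Expanding directly:
\begin{align*}
M &= 4a^2b^2U_2^2+(a^2-b^2)^2V_2^2 = b^4 P(s),\\
f_1 &= (a^2+b^2)^2U_2^2+(a^2-b^2)^2V_2^2 = b^4 Q(s),
\end{align*}
where the second line uses $W_2^2=U_2^2+V_2^2$ to match the coefficients $W_2^2$ and $2(U_2^2-V_2^2)$ of $Q$. Hence $v:=\sqrt{M}\,\sqrt{f_1}/b^4\in\Q$ satisfies $v^2=P(t^2)Q(t^2)$, so $(t,v)=(a/b,v)$ lies in $\HC(\Q)$.

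Now apply Theorem~\ref{thm:hauptsatz}. For each $(m,n)\in\mathcal{S}_{100}$ one of its cases (a)--(c) holds. This is the computational verification of \S\ref{ssec:verified}, with rank zero certified either by sharp \texttt{ellrank} bounds or by Theorem~\ref{thm:kolyvagin-cert}. The theorem then gives $|\HC(\Q)|=8$, so $\HC(\Q)$ consists exactly of the eight points in Lemma~\ref{lem:trivial}. Their $t$-coordinates lie in $\{0,\pm1,\infty\}$. But $a>b>0$ gives $t=a/b\in(1,\infty)\cap\Q$, which is none of these, and this is the contradiction. By Lemma~\ref{lem:scaling}, the conclusion also covers non-primitive multiples.

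The main obstacle is not the deduction but the fact that it rests entirely on the correctness of the hypotheses of Theorem~\ref{thm:hauptsatz} for every listed fiber. Two points need care. First, the torsion orders and the lift count $k=0$ in case (c) must be established rigorously, not merely by a height-bounded search. Second, the fiber recorded in $\mathcal{S}_{100}$ must be the $(m,n)$ actually produced by Theorem~\ref{thm:reduction}, since that theorem fixes a labelling of $Y$ versus $Z$. I would handle the labelling by stating the corollary exactly as given, for whichever master tuple arises. I would also point out that swapping $(a,b)\leftrightarrow(m,n)$ yields a second chance to land in $\mathcal{S}_{100}$.
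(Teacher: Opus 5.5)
Your argument is correct and is essentially the paper's: the corollary comes from combining Theorem~\ref{thm:reduction} with Theorem~\ref{thm:hauptsatz} on each fiber of $\mathcal{S}_{100}$, exactly as you do. Your explicit identities $M = b^4P(t^2)$ and $f_1 = b^4Q(t^2)$, and your observation that $t = a/b > 1$ avoids the $t$-coordinates $\{0,\pm1,\infty\}$ of the eight trivial points, make precise two steps the paper leaves informal: the passage to $\HC$ in \S\ref{ssec:explicit-form} and the ``degenerate brick'' interpretation in Lemma~\ref{lem:trivial}.
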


\section{Discussion}
\label{sec:discussion}

\subsection{What remains uncovered}

The combined methodology covers $1{,}072$ out of $2{,}040$ fibers up to $M_{\max} = 100$. The remaining $968$ fibers fall into two categories:

\begin{enumerate}[label=(\alph*)]
\item \textbf{Persistently ambiguous rank.} For at least one quotient $E_q$, PARI's \texttt{ellrank} returns $[0, k]$ with $k > 0$ and the modular-symbol computation $L(E, 1)/\Omega_E$ either vanishes (forcing $\rk(E_q) \ge 1$) or fails to terminate within our compute budget (large conductor $> 10^{12}$, especially for $E_{uV}$). These fibers most likely either have $\rk(E_q) \ge 1$ for every quotient, or are in principle treatable by allocating more compute to the modular-symbol computation.

\item \textbf{Hard fibers.} Fibers where all three elliptic factors have rank $\ge 1$ (rigorously bounded below). Then Lemma~\ref{lem:tors-intersect} does not apply to any factor. Standard (linear) Chabauty--Coleman applies only when $\rk(\Jac(\HC)(\Q)) < g(\HC) = 3$, that is when the total rank from Corollary~\ref{cor:rank-additivity} is at most $2$. Empirically, total rank $\ge 3$ occurs in the majority of these fibers; quadratic Chabauty would be the natural next tool.
\end{enumerate}

The asymptotic regime $\max(m, n) > 100$ is not addressed by the present scan; a uniform statement (positive proportion of fibers with a rank-zero quotient, or a Brauer obstruction independent of $(m, n)$) would be needed to settle this.

\begin{example}[A hard-case fiber: $(m, n) = (5, 2)$]
\label{ex:hard52}
With $U_2 = 21$, $V_2 = 20$, $W_2 = 29$, the three elliptic factors have ranks
$$
\rk(E_{PQ}(\Q)) = 2, \quad \rk(E_{uV}(\Q)) = 1, \quad \rk(E_3(\Q)) = 1,
$$
so $\rk(\Jac(H_{5,2})(\Q)) = 4 \ge g(H_{5,2}) = 3$. Neither Lemma~\ref{lem:tors-intersect} nor standard Chabauty applies. PARI's \texttt{hyperellratpoints} up to height $B = 10^6$ still finds only the eight trivial points, but this provides only empirical evidence, not a proof.
\end{example}

\subsection{Possible extensions}

\begin{enumerate}[label=(\roman*),leftmargin=2em]
\item \emph{Quadratic Chabauty.} For genus-$3$ hyperelliptic curves with $\rk(\Jac) = g$, the methods of Balakrishnan--Dogra--M\"uller--Tuitman \cite{balakrishnan-dogra-mueller-tuitman} give effective enumeration of rational points via $p$-adic heights. An adaptation to our family would close many of the hard fibers in (b).

\item \emph{Mordell--Weil sieving.} Combining the rank-decomposition of Corollary~\ref{cor:rank-additivity} with reductions modulo many small primes, one can in principle bound $|\HC(\Q)|$ tighter than via Coleman alone. The decomposition $\Jac(\HC) \sim E_{PQ} \times E_{uV} \times E_3$ makes this approach particularly attractive: each factor's reduction is an explicit elliptic curve over $\F_p$.

\item \emph{Brauer--Manin obstruction.} The cuboid surface $\mathcal{S} \subset \mathbb{P}^7_\Q$ defined by the four square conditions
$$
X^2 + Y^2 = F_3^2, \; X^2 + Z^2 = F_2^2, \; Y^2 + Z^2 = F_1^2, \; X^2 + Y^2 + Z^2 = W^2
$$
admits a Brauer group whose computation has been initiated by Stoll and others without finding an obstruction so far. A uniform obstruction would settle Conjecture~\ref{conj:B} entirely.

\item \emph{Modular and CM interpretations.} The factor $E_3$ in our decomposition has the form $V^2 = (V_2^2 w^2 + 4 U_2^2)(W_2^2 w^2 + 4 U_2^2)$, which depends on $(U_2, V_2, W_2)$ as a primitive Pythagorean triple. It would be of interest to determine whether this family admits a modular parametrization or CM structure that would yield uniform rank bounds.

\item \emph{Statistical evidence.} Goldfeld's conjecture \cite{goldfeld} predicts that, in a one-parameter family of elliptic curves ordered by conductor, the algebraic rank should be $0$ or $1$ with density $\tfrac12$ each. Our empirical data ($1{,}072 / 2{,}040 \approx 52.5\%$ proven via $\rk = 0$ on either $E_3$ or $E_{uV}$) is broadly consistent with such heuristics, but does not yet match a precise prediction.
\end{enumerate}

\subsection{Connection to Conjecture~B}

Combining Theorems~\ref{thm:reduction}, \ref{thm:hauptsatz} and Lemma~\ref{lem:scaling}, we have the following precise version of our partial result:

\begin{corollary}[Partial Conjecture~B]
\label{cor:partial-B}
Let $\mathcal{S}_{100} \subset \N^2$ denote the set of $1{,}072$ pairs $(m, n)$ listed in Appendix~\ref{app:fibers}. Then no perfect cuboid $(X, Y, Z) \in \N^3$ exists whose primitive form arises from a master tuple $(a, b, m, n)$ with $(m, n) \in \mathcal{S}_{100}$.
\end{corollary}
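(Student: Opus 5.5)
The plan is to chain together results already in the paper; no new arithmetic input is needed. Suppose, for contradiction, that $(X,Y,Z)\in\N^3$ is a perfect cuboid whose primitive form comes from a master tuple with $(m,n)\in\mathcal{S}_{100}$.

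\emph{Step 1: reduce to a primitive brick.} Write $(X,Y,Z)=k\cdot(X_0,Y_0,Z_0)$ with $(X_0,Y_0,Z_0)$ primitive. By Lemma~\ref{lem:scaling}, $(X_0,Y_0,Z_0)$ is again a perfect cuboid, and in particular a primitive Euler-brick. Theorem~\ref{thm:reduction} then gives a unique master tuple $(a,b,m,n)$, with $a>b>0$ and $m>n>0$, such that $X_0=U_1U_2/g$, $Y_0=V_1U_2/g$ and $Z_0=U_1V_2/g$. By hypothesis $(m,n)\in\mathcal{S}_{100}$.

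One subtlety is that the labelling of $Y_0$ versus $Z_0$ swaps $(a,b)\leftrightarrow(m,n)$. The corollary is phrased as ``arises from a master tuple with $(m,n)\in\mathcal{S}_{100}$'', so I fix whichever labelling realises this.

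\emph{Step 2: produce a rational point on $\HC$.} Since the cuboid is perfect, $f_1=g^2(X_0^2+Y_0^2+Z_0^2)$ is a nonzero square by \eqref{eq:f1-def}. The master-tuple condition says $M$ is a nonzero square. Set $t_0=a/b$. The construction in \S\ref{ssec:explicit-form} takes $v$ to be the product of the two square roots, each divided by the appropriate power of $b$. By homogeneity, $M$ and $f_1$ are $b^4$ times the values $P(t_0^2)$ and $Q(t_0^2)$, up to square constants. This yields a rational $v_0$ with $(t_0,v_0)\in\HC(\Q)$.

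\emph{Step 3: apply the main theorem.} Because $(m,n)\in\mathcal{S}_{100}$, one of cases (a)--(c) of Theorem~\ref{thm:hauptsatz} holds, as verified in \S\ref{ssec:verified}. Hence $\HC(\Q)$ consists exactly of the eight points of Lemma~\ref{lem:trivial}. Their $t$-coordinates lie in $\{0,\pm1,\infty\}$, so $t_0\in\{0,\pm1,\infty\}$. This contradicts $a>b>0$, which forces $t_0=a/b\in(1,\infty)$.

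\emph{Main obstacle.} The one step needing care is Step~2: checking that the passage from $(M=\square,\ f_1=\square)$ to a point of $\HC$ is really as stated. Concretely, I must confirm that $P(t^2)$ and $Q(t^2)$ are, up to rational square factors, the dehomogenised $M$ and $f_1$. I would verify this by expanding $M/(4b^4)$ and $f_1/b^4$ in $t=a/b$ and comparing them with $P$ and $Q$.

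If these identifications hold only up to a nonsquare constant, the argument instead lands on a quadratic twist. That would have to be handled by revisiting \S\ref{ssec:explicit-form}. I expect the identity to hold as stated, since the trivial points at $t=0,\pm1$ behave correctly.

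Everything else is bookkeeping: the reduction, the scaling, and the fact that the trivial points have $t_0\notin(1,\infty)$.
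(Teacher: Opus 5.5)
Your proposal is correct and is essentially the paper's argument, which obtains the corollary by chaining Lemma~\ref{lem:scaling}, Theorem~\ref{thm:reduction} and Theorem~\ref{thm:hauptsatz}; your Step~2 makes explicit the translation to $\HC$ that the paper takes from \S\ref{ssec:explicit-form}. The identity you were worried about holds exactly, with no factor of $4$ and no twist: for $t_0=a/b$ one has $M=b^4P(t_0^2)$ and $f_1=b^4Q(t_0^2)$, so $v_0=\sqrt{M f_1}/b^4$ gives a point of $\HC(\Q)$ with $t_0>1$, which lies outside the eight trivial points.
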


The corollary covers an explicit family of $1{,}072$ master-tuple fibers. For each fiber $(m, n) \in \mathcal{S}_{100}$, the result excludes a perfect cuboid arising from \emph{any} master tuple $(a, b, m, n)$ with that fixed $(m, n)$, regardless of the rank of $E_{m, n}$ (which controls how many master tuples populate the fiber but does not enter the torsion-intersection argument). The full Conjecture~B would follow from extending $\mathcal{S}_{100}$ to all of $\{(m, n) \in \N^2 : \gcd(m, n) = 1, m - n \text{ odd}\}$.

\appendix

\section{The 1{,}072 verified fibers}
\label{app:fibers}

The complete list of $1{,}072$ pairs $(m, n)$ with $\gcd(m, n) = 1$, $m - n$ odd, $m \le 100$ satisfying one of the cases (a)--(c) of Theorem~\ref{thm:hauptsatz} is provided in the companion repository as \texttt{paper3/data/proven\_fibers.csv}. The CSV columns are
$$
\texttt{m},\ \texttt{n},\ \texttt{quotient},\ \texttt{rank\_resolution},\ \texttt{torsion\_size},\ \texttt{lift\_argument},\ \texttt{M\_MAX\_scan},\ \texttt{date},
$$
where \texttt{quotient} $\in \{E_3, E_{uV}\}$, \texttt{rank\_resolution} $\in \{$\texttt{ellrank}, \texttt{kolyvagin}$\}$ records how the rank-zero hypothesis was certified, and \texttt{lift\_argument} $\in \{$\texttt{naive}, \texttt{refined}$\}$ records whether the naive torsion-intersection bound or the explicit lift count of \S\ref{ssec:lift-refinement} was used.

For orientation, the fibers with $m \le 25$ are listed below; the full list extends through $\max(m, n) \le 100$ and contains $1{,}072$ pairs in total ($298$ with $m \le 50$, $774$ with $50 < m \le 100$).

\begin{small}
\noindent
(2,1), (3,2), (4,1), (4,3), (5,4), (6,1), (6,5), (7,2), \\
(7,4), (7,6), (8,1), (8,3), (9,2), (9,4), (9,8), (10,7), \\
(10,9), (11,2), (11,6), (12,1), (12,5), (12,7), (12,11), (13,2), \\
(13,4), (13,8), (13,10), (13,12), (14,5), (14,9), (14,11), (15,2), \\
(15,14), (16,3), (16,5), (16,7), (16,11), (16,13), (16,15), (17,2), \\
(17,4), (17,8), (17,10), (18,1), (18,5), (18,7), (18,11), (18,13), \\
(19,2), (19,4), (19,6), (19,8), (19,12), (19,14), (19,18), (20,1), \\
(20,3), (20,7), (20,9), (20,19), (21,2), (21,8), (22,3), (22,5), \\
(22,9), (22,15), (22,17), (22,21), (23,2), (23,10), (23,12), (23,14), \\
(23,16), (23,18), (24,1), (24,5), (24,7), (24,13), (24,19), (24,23), \\
(25,2), (25,6), (25,8), (25,12), (25,16), (25,22).
\end{small}

\noindent\textit{Generated by \texttt{paper3/analysis/jh\_torsion\_full\_dispatch.py} (with subprocess workers \texttt{jh\_torsion\_full\_worker.py}). Full data: \texttt{paper3/data/proven\_fibers.csv}.}

\section{Computational verification}
\label{app:code}

The verification scripts are written in SageMath~10.7 \cite{sage} and use PARI/GP version~2.17.3 \cite{pari}. Three rigorous certificates are combined:

\begin{enumerate}[label=(\arabic*),leftmargin=2em]
  \item \textbf{Sharp \texttt{ellrank}.} PARI's \texttt{ellrank} (called with \texttt{effort = 2}) returns rank bounds $[r_{\mathrm{low}}, r_{\mathrm{up}}]$ via 2-descent; we accept rank-zero only when $r_{\mathrm{low}} = r_{\mathrm{up}} = 0$.
  \item \textbf{Kolyvagin certification.} For ambiguous bounds $[0, k]$ with $k > 0$, we call Sage's \texttt{analytic\_rank\_upper\_bound}, which returns a rigorous upper bound on $\mathrm{ord}_{s=1} L(E, s)$. When this is $0$, $\rk(E(\Q)) = 0$ follows from Theorem~\ref{thm:kolyvagin-cert}.
  \item \textbf{Lift refinement.} For fibers with $\rk(E_{uV}) = 0$ and $|tors| = 8$, the eight rational torsion points on the $E_{uV}$-quartic are enumerated via PARI's \texttt{hyperellratpoints} (with height $10^5$); the lift to $\HC$ is verified by the discriminant test of \S\ref{ssec:lift-refinement}.
\end{enumerate}

Torsion structure is computed via Sage's \texttt{E.torsion\_subgroup()} on the Weierstrass model obtained from the quartic via PARI's \texttt{ellfromeqn}. All scripts are reproducible from the companion repository:

\begin{center}
\url{https://github.com/renpe/euler-brick-obstructions/tree/main/paper3}
\end{center}

The main pipeline \texttt{analysis/jh\_torsion\_full\_dispatch.py} (with subprocess workers \texttt{analysis/jh\_torsion\_full\_worker.py}) runs in roughly $5$ minutes on a $30$-core workstation, producing \texttt{data/proven\_fibers.csv} and a full per-fiber log \texttt{data/scan\_full\_M100.jsonl}. The proof of Theorem~\ref{thm:hauptsatz} relies only on the three certificates above; height searches via PARI's \texttt{hyperellratpoints} up to $B = 10^6$ are an independent sanity check and find no non-trivial rational points on any fiber.

\clearpage
\bibliographystyle{amsalpha}
\bibliography{paper3}

\end{document}